\begin{document}
\jvol{00} \jnum{00} \jyear{2011} \jmonth{September}



\title{A gap in the spectrum of the Neumann-Laplacian on a periodic waveguide}

\author{F.L. Bakharev$^{\rm 1}$,  S.A. Nazarov$^{\rm 2}$ and
K.M. Ruotsalainen$^{\rm 3}$ $^{\ast}$
\thanks{$^\ast$Corresponding author. Email: keijo.ruotsalainen@ee.oulu.fi}\\\vspace{6pt}  
$^{\rm 1}${\em{Chebyshev Laboratory, St.-Petersburg State University,\\
14th Line, 29b, Saint-Petersburg, 199178 Russia}};\\\vspace{6pt}
$^{\rm 2}${\em{Institute of Problems Mechanical Engineering,
Russian Academy of Sciences, St.Petersburg,
V.O., Bolshoy pr., 61, 199178 Russia}};\\\vspace{6pt}
$^{\rm 3}${\em{Mathematics Division, University of Oulu, P.O. Box 4500, 90014 Oulu, Finland}}\\\vspace{6pt}
\received{September 2011}}

\maketitle
\begin{abstract}
\paragraph*{Abstract} We will study the spectral problem related to the Laplace operator in a
singularly perturbed periodic waveguide. The waveguide is a quasi-cylinder with contains periodic
arrangement of inclusions. On the boundary of the waveguide we consider both Neumann and Dirichlet
conditions.
We will prove that provided the diameter of the inclusion is small enough in the spectrum of
Laplacian opens spectral gaps, i.e. frequencies that does not propagate through the waveguide.
The existence of the band gaps will verified using the asymptotic analysis of elliptic operators.
\end{abstract}

\begin{keywords}
Helmholtz equation, periodic waveguide, spectral gaps,
singularly perturbed domains
\end{keywords}
\begin{classcode}
35P05; 34E05; 34E10; 76M45\end{classcode}\bigskip

\section{Introduction}
\label{sec:1}

The main goal in this paper is to study the spectral properties of the
Neumann-Laplacian on the periodic singularly perturbed periodic quasi-cylinder
\(\Omega(\epsilon)\):
\begin{eqnarray}\label{form1}
-\Delta u^\epsilon(x)&=&\lambda^\epsilon u^\epsilon(x),\ x\in \Omega(\epsilon),\\
\label{form2}
\partial_n u^\epsilon(x)&=&0,\ x\in \partial\Omega(\epsilon).
\end{eqnarray}
This spectral problem should be interpreted as a singular perturbation of the
same problem in the straight cylinder \(\Omega=\omega\times{\mathbb{R}}\subset
{\mathbb{R}}^3\). 

The quasi-cylinder \(\Omega(\epsilon)\) is a periodic set which depends on a
small parameter \(\epsilon>0\). It is obtained from the straight cylinder
\(\Omega\) by a periodic nucleation of small voids with diameter of order
\(\epsilon\) (see fig. 1a and fig 1b, respectively).  
We remark that without lost of generality we have reduced the period to one by
rescaling.

\begin{figure}
\begin{center}
\resizebox{6cm}{4cm}{\includegraphics{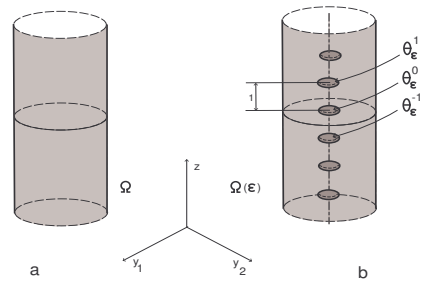}}
\caption{A cylinder and a quasi-cylinder with periodic family of voids}
\end{center}
\end{figure}

It is known (e.g. \cite{Wilcox}) that the spectrum \(\sigma\) of the
Neumann-Laplacian in the straight cylinder \(\Omega\) implies the continuous
spectrum \(\sigma_c\) which is the closed positive real semi-axis
\(\overline{\mathbb{R}_{+}}\), and the point spectrum is empty. In particular,
for any real \(\mu>0\) the functions, or \textit{oscillating waves}, 
\begin{equation}\label{wave1}
e^{\pm i\sqrt{\mu}z}, 
\end{equation}
give rise to a singular Weyl sequence of the problem operator \(A\) at
the point \(\mu\in \overline{\mathbb{R}_{+}}\) (cf. \S 9.1 in \cite{BiSo}).

The structure of the spectrum \(\sigma(\epsilon)\) in the perturbed quasi-cylinder
\(\Omega(\epsilon)\) is much more complicated. Owing to the Gel'fand transform
\cite{Gel} (see Section 2) 
and the Floquet-Bloch theory (see e.g. \cite{Kuchbook, NaPl}) 
the spectrum \(\sigma(\epsilon)\) of the Neumann-Laplacian is 
endowed with the band-gap structure. Namely, the essential spectrum
\(\sigma(\epsilon)=\sigma_e(\epsilon)\) is a union of closed connected segments
\(\Upsilon_n(\epsilon)\subset \overline{\mathbb{R}_{+}}\): 
\begin{equation*}
\displaystyle\sigma(\epsilon)=\bigcup_{n\in \mathbb{N}}\Upsilon_n(\epsilon).
\end{equation*}
We obtain \textit{a spectral gap} in the spectrum if there exist
an interval in the positive real semi-axis which is free of the spectrum, see
the left part of
fig. 2. However, when the segments overlap each other (see the right part of
fig.2), no spectral gap opens. 

Here we note that the essential spectrum \(\sigma_e(\epsilon)\) coincides with
the continuous spectrum provided none of the segments \(\Upsilon_n(\epsilon)\)
collapses to a single point, which becomes an eigenvalue of the operator
\(A(\epsilon)\) with infinite multiplicity, and belongs to the point spectrum
while the discrete spectrum is still empty. A negative answer to this
possibility is still an open question and the authors can
only prove the following: \textit{For any \(t>0\) there exists \(\epsilon_t>0\)
such that}
\(\sigma_p(\epsilon)\cap [0,t]=\emptyset\) when \(\epsilon\in (0,\epsilon_t]\). 
We do not pay any attention to this incomplete result and therefore always speak 
in this paper about the
essential spectrum.

\begin{figure}
\begin{center}\resizebox{8cm}{1cm}{\includegraphics{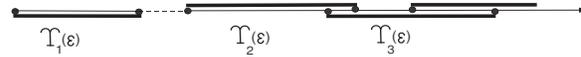}}\end{center}
\caption{The gap between the first and second bands,  while a gap between the second
and the fourth band is covered by the third one}
\end{figure}

In the literature the existence of spectral gaps is mainly investigated for
periodic media which are infinite in all directions while the coefficients of
differential operators, both in scalar and matrix case, are usually assumed
to be contrasting (see e.g. \cite{Fig,Fri,Gre,HemLin,Zhi} for scalar problems and \cite{Fil,na461} for systems).
Much less
results are obtained for periodic waveguides, which are infinite in one
direction only.  In this case spectral gaps ought to be opened by varying
the shape of the periodicity cell only which is in accordance with the results
in the known engineering practise.

There are two useful approaches to fulfil the goal. The first one utilises the
asymptotic analysis of the spectrum for the associated model problem in the
periodicity cell, like we do in the  present paper. This approach has been
realised for the Dirichlet-Laplacian in papers
\cite{Yoshitomi,FriSol,NaMaNo,CaNaPe} and others. We emphasise that our results
for the Neumann-Laplacian are completely new and require different techniques.

The second approach is based on the application of sharp parameter-dependent
estimates provided by Korn's inequalities, usually weighted and anisotropic,
see e.g. \cite{na435,na433,na454}  and others.
On one hand, this approach does not require a precise description of the dependence 
of the cell
shape perturbation on the small geometric parameter, but, in contrast to the first
approach, one can only prove the existence of one or several spectral
gaps without any information on the position and width of the gaps.

Combining of the above-mentioned approaches one could get much more
elaborated results by using their advantages. However, we cannot examine here
the whole spectrum and to our knowledge there does not exist any results of this
type. Here indeed we discuss only the first spectral gap.

To prove or to disprove the existence of a gap between the segments
\(\Upsilon_1(\epsilon)\) and \(\Upsilon_2(\epsilon)\) (cf. fig. 2), we employ
the method of compound asymptotic expansions
\cite{MaNaPl} to the associated model spectral problems in the periodicity cell 
\begin{equation}\label{cell}
\varpi_{\epsilon}=\varpi\setminus \overline{\theta_\epsilon^0},\quad
\varpi=\omega\times (0,1) 
\end{equation}
of the quasi-cylinder (\ref{quasi}). The straight cylinder 
with the small void \(\theta^0_\epsilon\) (see the two-dimensional dummy in fig.
3b) is nothing but a good example of a domain with the \textit{singularly
perturbed} boundary
\(\partial\varpi_{\epsilon}=\partial\varpi\cup\partial\theta_\epsilon^0\). The
asymptotic analysis of the eigenvalues for the Laplace operator have been
developed at a great extend in
\cite{Na61,Osava1,Gad,Na108,Na240,Na406,na416} and many others. 
However, the dependence of the model
problem on the dual variable \(\eta\in[0,2\pi)\) of the Gel'fand transform
brings a serious complication to both, the formal asymptotic procedures and the
justification since, instead of a single spectral problem, we have to deal with
a family of spectral problems dependent on the real parameter \(\eta\in
[0,2\pi)\) (see Section 3 and Section 4 for more details).

\begin{figure}
\begin{center}\resizebox{6cm}{2.5cm}{\includegraphics{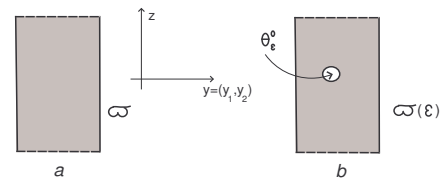}}\end{center}
\caption{The two-dimensional dummy of the periodicity cell}
\end{figure}

\section{Preliminaries and notations}
\label{seq:2}

\subsection{The quasi-cylinder and the Gel'fand transform}
\label{subseq:2-1}
The purpose of this section is to explain what do we mean by a
\textit{periodic quasi-cylinder}. There are several ways to describe it; but
here we start with the straight cylinder which is the cartesian product of a
bounded  Lipschitz domain \(\omega\subset\mathbb{R}^2\) and the real line:
\(\Omega=\omega\times\mathbb{R}\). 

Consider now  a fixed bounded smooth domain \(\theta\) with the coordinate
origin \(\xi=0\) in its interior. We introduce the family of sets
\(\theta^j_{\epsilon}\) by defining
\[\theta^j_{\epsilon}=\{x=(y_1,y_2,z):\ (y_1,y_2,z-j)\in \theta^0_\epsilon\}, 
\theta^0_{\epsilon}=\{x: \xi=\epsilon^{-1}(x-x^0)\in \theta\},\]
where \(x^0\in\varpi=\omega\times (0,1)\).
Note that the diameter of the set \(\theta^j_{\epsilon}\) is of order
\(\epsilon\).

The quasi-cylinder \(\Omega(\epsilon)\) is a periodic set which depends
on a small parameter \(\epsilon>0\) and it will be obtained
from the straight cylinder \(\Omega\) excluding the sets
\(\overline{\theta^j_\epsilon}\)
from it (see fig. 1a and fig 1b, respectively). 
In other words, the quasi-cylinder can be written as
\begin{equation}\label{quasi}
\Omega(\epsilon)=\Omega\setminus\bigcup_{j\in\mathbb{Z}}\overline{
\theta^j_\epsilon}.
\end{equation}

We call the set \(\varpi(\epsilon)=\varpi\setminus \theta^0_\epsilon\) 
in \eqref{cell} as the
periodicity cell of the quasi-cylinder. Note that the cylinder \(\Omega\) can be
viewed as a quasi-cylinder with the straight periodicity cell \(\varpi\) as in (\ref{cell}).

We recall here the definition of the Gel'fand transform \cite{Gel}: 
\begin{equation}
\label{Gelfand}
v(y,z)\mapsto\,V(y,z,\eta)=\frac{1}{\sqrt{2\pi}}\sum_{j\in \mathbb{Z}}
e^{-i\eta j}v(y,z+j),
\end{equation}
where \((y,z)\in \Omega(\epsilon)\) on the left, \(\eta\in[0,2\pi)\) and
\((y,z)\in\varpi(\epsilon)\) on the right. As it is well known this operator is
an isometric isomorphism from \(L^2(\Omega(\epsilon))\) onto
\(L^2(0,2\pi;L^2(\varpi(\epsilon)))\) and an isomorphism
between the Sobolev spaces
\(H^1(\Omega(\epsilon))\) and \(L^2(0,2\pi;H^1_\eta(\varpi(\epsilon)))\)
(see e.g. \cite{NaPl,Kuchbook}). 
Here \(L^2(0,2\pi;L^2(\varpi(\epsilon)))\) consists of
\(L^2(\varpi(\epsilon))\)-valued (complex) \(L^2\)-functions on \([0,2\pi]\).
Similarly, the space \(L^2(0,2\pi;H^1_\eta(\varpi(\epsilon)))\) contains those
\(H^1_\eta(\omega(\epsilon))\)-valued (complex) \(L^2\)-functions on
\([0,2\pi]\). Finally we have denoted by \(H^1_\eta(\varpi(\epsilon))\) the space
of Sobolev functions \(f\) which are quasi-periodic in the \(z\)-variable,
i.e. such that function \((y,z)\mapsto e^{-i\eta z}f(y,z)\)
is 1-periodic in \(z\). 

\subsection{The relationship between waves and Floquet waves in the straight
cylinder}
\label{subseq:2-2}

Before going into the asymptotic analysis of the spectral problem in the
periodic quasi-cylinder we will study the relationship of the standard wave
solutions and Floquet waves for the Neumann-Laplacian in the cylinder
\(\Omega\). They are the solutions of the differential equation
\begin{equation}\label{form3}
-\Delta_x u(x)=\lambda u(x),\ x\in\Omega\,,
\end{equation}
supplied with the Neumann conditions on the boundary
\begin{equation}\label{form4}
\partial_n u(x)=0,\ x\in\partial\Omega\,.
\end{equation}

Applying the Gel'fand transform to (\ref{form3}) and (\ref{form4}) we obtain
the model problem in the periodicity cell \(\varpi\) with the dual Gel'fand 
parameter  \(\eta\in[0,2\pi)\), namely, the differential equation 
\begin{equation}\label{form5}
-\Delta_x U(x,\eta)=\Lambda(\eta)U(x,\eta),\
(y,z)\in\varpi,
\end{equation}
the Neumann condition on the lateral boundary of \(\gamma=\partial\varpi\)
\begin{equation}\label{form6}
\partial_n U(x;\eta)=0,\ x\in \gamma,
\end{equation}
and the quasi-periodicity conditions on the opposite ends of the cylindrical cell
\(\varpi=\omega\times (0,1)\):
\begin{equation}\label{form7}
U(y,0)=e^{-i\eta}U(y,1),\ \partial_z U(y,0)=e^{-i\eta}\partial_z U(y,1),\
y\in\omega.
\end{equation}

Let \(\Lambda_p\) be an eigenvalue and let \(U_p\) be the
corresponding eigenfunction of the model spectral problem
\eqref{form4}-\eqref{form7},  
then the Floquet wave can be written as 
\begin{equation}
\label{Floq}
\Omega\ni (y,z)\mapsto e^{i\eta [z]}U_p(y,z-[z])
\end{equation}
where \([z]\in\mathbb{Z}\)
stands for the entire part of the real number \(z\). Notice that function
\eqref{Floq}
is smooth due to the quasi-periodicity conditions \eqref{form7} and it becomes a
solution of the homogeneous problem (\ref{form3}) with the spectral 
parameter \(\lambda=\Lambda_p\).

Any function of the type (\ref{Floq}) must be obtained from the
standard waves
\begin{equation}
\label{wave}
(y,z)\mapsto e^{i\zeta z}V_q(y)
\end{equation}
in the cylinder \(\Omega\), where \(\zeta\in \mathbb{R}\) is the dual Fourier
variable and \(V_q\) is an eigenfunction of the model problem 
\begin{equation}
\label{model-problem}
-\Delta_y V(y)=MV(y), \quad y\in\omega\,, 
\quad \partial_nV(y)=0, y\in \partial\omega
\end{equation}
on the cross-section \(\omega\). Notice that the
wave solution (\ref{wave}) with \(q=1\) and \(V_1(y)=const\) corresponds to the
eigenvalue \(M_1=0\) and coincides with the wave solution (\ref{wave1}) where
\begin{equation*}
\mu=\zeta^2
\end{equation*}
stands for the parameter in the Helmholtz operator \(\Delta_x+\mu\). The main
difference between formulae (\ref{Floq}) and (\ref{wave}) follows from the
role played by the parameters \(\eta\in[0,2\pi)\) and \(\zeta\in\mathbb{R}\).

We write the wave solution \eqref{wave} as follows:
\begin{equation}
\label{relation1}
e^{i\zeta z}V_q(y)
=e^{i(\zeta-2 \pi m )[z]}
e^{i(\zeta-2 \pi m)(z-[z])}e^{2\pi m iz}V_q(y)
\end{equation}
where \(m=[(2\pi)^{-1}\zeta]\) is the maximal integer such that \(2\pi m\leq
\zeta\).
Now we denote the first multiplier on the right of \eqref{relation1} by
\(e^{i\eta[z]}\) and the remaining part by \(U_p(y,z-[z])\) so that
\begin{eqnarray}
\label{rel2}
U_p(y,z)&=&e^{i\eta z}e^{2\pi miz}V_q(y),\\
\eta&=&\zeta-2\pi m\in [0,2\pi). \nonumber
\end{eqnarray}
Since \(e^{2\pi mi z}\) is 1-periodic in \(z\), we see that the function
\(U_p\)
satisfies the quasi-periodicity conditions \eqref{form7}. In other words,
\eqref{relation1}
implies the Floquet representation \eqref{Floq} of the wave solution
\eqref{wave}
with the ingredients \eqref{rel2}. 

Relationship (\ref{rel2}) helps us to list all eigenpairs
\(\{\Lambda_p,U_p\}\) of the model problem in the straight cell \(\varpi\) in
terms of the eigenvalues 
\begin{equation}
\label{Meig}
0=M_1<M_2\leq M_3\leq\ldots
\end{equation}
and eigenfunctions \(\{V_1,\,V_2,\,V_3,\dots\}\) of the model problem
\eqref{model-problem}.
Namely, there holds the relations
\begin{equation}
\label{eigrel} 
\Lambda_{q,m}(\eta)=M_q+(\eta+2\pi m)^2,\quad q\in{\mathbb{N}},\, m\in\mathbb{Z}.
\end{equation}
We emphasise that the eigenvalues \(\Lambda_{q,m}(\eta)\) are now enumerated
with two indexes so that it is necessary to reorder them to get the monotone increasing
unbounded sequence 
\begin{equation}
\label{seq-Lambda}
\Lambda_1(\eta)\leq \Lambda_2(\eta)\leq\cdots\leq \Lambda_k(\eta)\leq \cdots.
\end{equation}

The lowest dispersion curve (\ref{eigrel}) in fig. 4a,
corresponding to the smallest eigenvalue \(M_1=0\) of the Neumann problem in
\(\omega\), divides into an infinite family of smooth finite curves which by
above formulae are joined to the lattice in fig 4b.  In the sequel we are
mainly interested in the couple of lowest curves in the lattice which are
redrawn in fig 5.a, 
\begin{equation}
\label{dispcurve}
\Lambda^0_+(\eta):=\Lambda_{0,0}(\eta)=\eta^2,\
\Lambda^0_{-}(\eta):=\Lambda_{0,-1}(\eta)=(\eta-2\pi)^2 .
\end{equation}
We shall use the notation (\ref{dispcurve}) throughout the paper.

\begin{figure}
\begin{center}\resizebox{8cm}{3cm}{\includegraphics{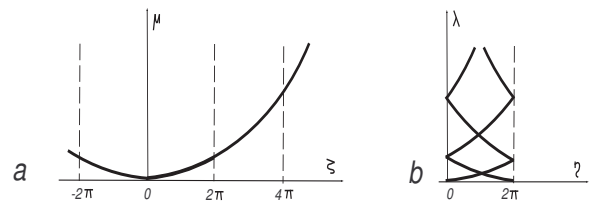}}\end{center}
\caption{The  lowest dispersion curve and the lattice frame made from it}
\end{figure}

The variational formulation of problem (\ref{form5})-(\ref{form7}) reads as 
\begin{equation}
\label{vari-form3}
(\nabla_xU,\nabla_x V)_{\varpi}=\Lambda(\eta) (U,V)_{\varpi}\,,
V\in H^1_{\eta}(\varpi)\,.
\end{equation}
Since for any real parameter \(\eta\) the bilinear form on the left-hand side
of (\ref{vari-form3}) is symmetric, closed and positive, we can associate with it 
a self-adjoint positive operator in the
Lebesque space \(L^2(\varpi)\) \cite[Thm. 10.1.2]{BiSo}. Moreover, the compact
embedding \(H^1(\varpi)\subset L^2(\varpi)\) and Theorems 10.1.5 and 10.2.2 in
\cite{BiSo} ensure that indeed the spectrum of this operator is discrete and
forms
an unbounded non-negative sequence (\ref{seq-Lambda}) which is constructed from
(\ref{Meig}). 

\subsection{Perturbation of the dispersion curves}
\label{subseq:14}

In the perturbed periodicity cell \(\varpi_{\epsilon}\) the both dispersion
curves (\ref{dispcurve}) experience a perturbation and, in principle, there can
occur two different situations pictured in fig. 5.b. and 5.c. First, the
dispersion curves \([0,2\pi) \ni \eta\to \Lambda_{\pm}^\epsilon(\eta)\) may
still intersect so that the spectral segments \(\Upsilon_1(\epsilon)\) and
\(\Upsilon_2(\epsilon)\) touch each other and the gap between them is closed.
The second possibility is that the graphs  of the functions
\begin{equation*}
\eta\mapsto\Lambda^\epsilon_{1}(\eta),\,
\eta\mapsto \Lambda^\epsilon_{2}(\eta)
\end{equation*}
do not intersect and the area overshadowed in fig. 5c projects into a gap
between segments \(\Upsilon_1(\epsilon)\) and \(\Upsilon_2(\epsilon)\). In the
sequel we will describe when the gap appears and indicate asymptotically its
position and width.

\begin{figure}
\begin{center}\resizebox{8cm}{3cm}{\includegraphics{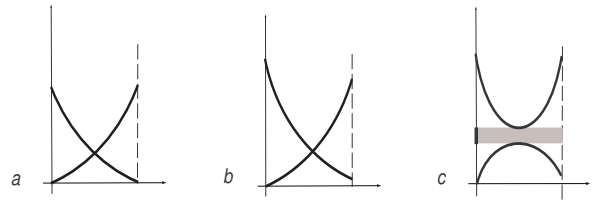}}\end{center}
\caption{Unperturbed (a), intersecting (b) and disjoint (c) curves}
\end{figure}

Before proceeding with the problem statement, the formal asymptotic analysis and
the justification, we note that the gap opens only under the condition 
\begin{equation}
\label{cond1}
M_1+\pi^2<M_2.
\end{equation}
Otherwise, the segment \(\Upsilon_2(\epsilon)\), which corresponds to the
perturbed eigenvalues in (\ref{eigrel}) with \(q=2\) and \(m=0,-1\) covers the
gap opened between the segments \(\Upsilon_1(\epsilon)\) and
\(\Upsilon_3(\epsilon)\) (see fig.6) which are now generated by eigenvalues in
(\ref{eigrel}) with \(q=1\) and \(m=0,-1\).

\begin{figure}
\begin{center}\resizebox{7cm}{3cm}{\includegraphics{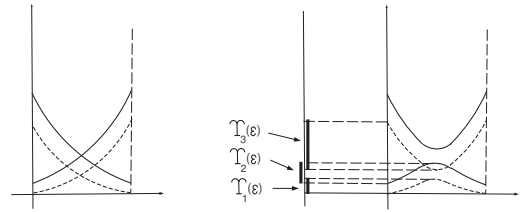}}\end{center}
\caption{Overlapping bands. The dotted lines correspond to the perturbation of
the lowest curves in fig. 5.a.}
\end{figure}

However, the rescaling performed in the very beginning rescales
also eigenvalues. To see this, assume that the period \(l\)
of the void nucleation is bigger than
one. Now the rescaling diminishes the size of the cross-section, i.e. the change
of the variables 
\((y,z)\to(y',z')=(l^{-1}y,l^{-1}z)\)
yields the shrunken cross-section
\[\omega'=\{y'\in \mathbb{R}^2:\ ly'\in\omega\}.\]
Simultaneously, eigenvalues of the model problem in \(\omega\) changes into
the eigenvalues \(M_1'=0\) and \(M_2'=l^2M_2\) of the model problem in the
shrunken cross-section \(\omega'\). By choosing \(l\) large enough we fulfil 
the condition (\ref{cond1}) for \(M_1'\). In other words, posing the small voids
\(\theta_{\epsilon}^{0}\) with a sufficiently large period may open 
a gap in the essential spectrum, where as if the period of the voids is
sufficiently small first triple of the spectral bands surely overlap.

Assuming condition (\ref{cond1}) we study the spectral problem 
\eqref{form1}-\eqref{form2}. Applying the Gel'fand transform 
(\ref{Gelfand}) to problem (\ref{form1})-(\ref{form2}), 
we obtain the model spectral problem in the periodicity cell
\(\varpi_{\epsilon}\) depending on \(\eta\in [0,2\pi)\):
\begin{eqnarray}\label{spctrlprob1}
-\Delta U^\epsilon(y,z)&=&\Lambda^\epsilon(\eta)U^\epsilon(y,z),\
(y,z)\in\varpi_{\epsilon}\,,\\
\label{bndrycond}
\partial_n U^\epsilon(x)&=&0,\ x\in
\gamma\cup\partial\theta(\epsilon)\,,\\
\label{percond}
U^\epsilon(y,0)=e^{-i\eta}U^\epsilon(y,1),&&\partial_z
U^\epsilon(y,0)=e^{-i\eta}\partial_z
U^\epsilon(y,1),\
y\in\gamma_0.
\end{eqnarray}

As above, using Theorem 10.1.2 \cite{BiSo} we associate with these spectral
problems a family of unbounded positive self-adjoint operators
\(T^\epsilon(\eta)\) on \(L^2(\varpi_{\epsilon})\) with discrete spectra, i.e.,
the eigenvalue sequences 
\[0\leq \Lambda^\epsilon_{1}(\eta)\leq\Lambda^\epsilon_{2}(\eta)\leq \cdots\leq
\Lambda^\epsilon_{k}(\eta)\leq \cdots\rightarrow\infty.\]
The immediate objective becomes to construct asymptotics for these
eigenvalues as \(\epsilon\to+0\).

\subsection{Mixed boundary value problem}

Although all the calculations are presented for the 
Neumann conditions on the boundary of the quasi-cylinder \(\Omega(\epsilon)\), we
also discuss slight modifications, which are needed to adapt our
calculations to the mixed boundary value problem 
\begin{eqnarray}\label{dir-1}
-\Delta u^\epsilon(x)&=&\lambda^\epsilon u^\epsilon(x),\
x\in \Omega(\epsilon)\,,\nonumber\\
\partial_n u^\epsilon(x)&=&0,\ x\in
\partial\Omega(\epsilon)\setminus \partial \Omega=\cup_{j\in\mathbb{Z}}\partial
\theta^j_\epsilon\,,\\ 
u^\epsilon(x)&=&0, x\in \partial \Omega\,.\nonumber
\end{eqnarray}                                      
We emphasise that the
asymptotic procedure of \cite{MaNaPl} employed here is not sensitive to the type
of boundary conditions on the lateral part of the boundary
\(\partial\omega\times (0,1)\) of the periodicity cell is employed. 
However, the change of the boundary condition from
Neumann
to Dirichlet on the surfaces of small voids crucially simplifies the
asymptotic analysis\footnote[1]{This does not hold in the two-dimensional
case, where the existence of a gap is still fully an open question.}. In this
way our paper becomes much more technical than \cite{NaMaNo} and \cite{CaNaPe},
where the Dirichlet problem was considered in the regularly and singularly
perturbed periodicity cells. The particular conclusions on the geometrical
characteristics of the gap are also different.

\section{The formal asymptotic procedure}
\label{sec: 3}

In this section we present a calculation which furnishes an asymptotic
formula for the first two eigenvalues (\ref{dispcurve}).
In order to simplify the presentation we start with considering the case when the eigenvalues for
the model problem are simple, e.g. when the Gel'fand dual variable \(\eta\) is
not equal to \(\pi\). Notice that the dispersion curves in fig. 5a
intersect each other just at \(\eta=\pi\), and we examine this situation
in the end of this section. 

In Sections 3.3 and 3.4 we give detailed calculations of the
boundary layers and a regular terms which lead to a direct formula for the main 
correction term in the asymptotics of the eigenvalues. 
With this asymptotic formula we are then able
to consider also the case \(\eta\approx \pi\) although at \(\eta=\pi\) where the model
problem has a multiple eigenvalue. As a result, in Section 3.5 we derive a
system of two linear algebraic equations from which we can find out the
perturbation terms in the asymptotics of the eigenvalues when
\(\eta\) approximately equal to \(\pi\).

To make the necessary conclusion for the case \(\eta\approx \pi\), we will apply
the trick proposed in \cite{NaMaNo} and introduce the additional deviation
parameter \(\psi\) in the representation
\begin{equation}
\label{psi}
\eta=\pi+\psi\epsilon^3\,.
\end{equation}
With the help of this new variable \(\psi\in[-\psi_0,\psi_0]\) we can
write the asymptotic formulae at the \(c\epsilon^3\)-neighbourhood of the point
\(\eta=\pi\).

We emphasise here the following difference in the notation: In Sections
(\ref{subsec: 31})-(\ref{subsec:34}) we set
\begin{eqnarray}
 \label{eta1}
\Lambda_+^{\epsilon}(\eta)&=&\Lambda_1^{\epsilon}(\eta),\,\Lambda_{-}^{\epsilon}(\eta)=\Lambda_2^{\epsilon}(\eta),\,
\eta\in [0,\pi),\\
\Lambda_+^{\epsilon}(\eta)&=&\Lambda_2^{\epsilon}(\eta),\,\Lambda_{-}^{\epsilon}(\eta)=\Lambda_1^{\epsilon}(\eta),\,
\eta\in (\pi,2\pi),\nonumber
\end{eqnarray}
while in Section (\ref{subsec:35})
\begin{equation}\label{eta2}
\Lambda_+^{\epsilon}(\eta)=\Lambda_2^{\epsilon}(\eta),\,\Lambda_{-}^{\epsilon}(\eta)=\Lambda_1^{\epsilon}(\eta),\,
\eta=\pi.
\end{equation}
The evident difference in the notations (\ref{eta1}) and (\ref{eta2}) helps to keep similar
asymptotic formulae in both cases.

Calculations in Sections \ref{subsec: 31}-\ref{subsec:34} are given for the Neumann problem 
(\ref{form1})-(\ref{form2}) in such a way that they can easily 
adopted for the mixed boundary value problem (\ref{dir-1}).
Namely, the eigenpair of the problem (\ref{model-problem}) \(\{M_1=0\), \(V_1=const\}\)  must be replaced
by the first eigenpair \(\{M_1, V_1\}\) of the Dirichlet problem
\begin{equation}
\label{S4}
-\Delta_y V(y)=M V(y),\, y\in \omega,\qquad V(y)=0, \, y\in\partial \omega\,.
\end{equation} 
Notice that the eigenvalue \(M_1\) is positive and simple while the corresponding eigenfunction \(V_1\) 
can be chosen positive inside the cross-section \(\omega\).

\subsection{Asymptotic expansions}
\label{subsec: 31}

To describe the asymptotic behaviour of two first eigenvalues
\(\Lambda^\epsilon_{\pm}(\eta)\), 
we accept the following \textit{ansatz}:
\begin{equation}\label{ansatz1}
\Lambda^\epsilon_\pm(\eta)=\Lambda^0_\pm(\eta)+\epsilon^3\Lambda'_\pm(\eta)+
\widetilde{\Lambda}^\epsilon_\pm(\eta),
\end{equation}
where \(\Lambda^0_\pm(\eta)\) is the eigenvalue (\ref{dispcurve}) of problem
(\ref{form3})-(\ref{form5}),
\(\Lambda'_\pm(\eta)\) is a correction term and \(\widetilde{\Lambda}
^\epsilon_\pm(\eta)\) a small remainder to be evaluated and estimated.

According to general asymptotic method in domains with singular
perturbations of the boundaries (see e.g. \cite{MaNaPl}) the
corresponding asymptotic ansatz for eigenfunctions reads as follows 
\begin{eqnarray}
\label{ansatz2}
U^{\epsilon}_\pm(x;\eta)&=&U^0_\pm(x;\eta)+\epsilon
\chi(x)W^1_\pm(\epsilon^{-1}(x-x^0);\eta)\nonumber\\
&+& \epsilon^2\chi(x)W^2_\pm(\epsilon^{-1}(x-x^0);\eta)
+ \epsilon^3U'_\pm(x;\eta)+\widetilde{U}^\epsilon_\pm(x;\eta).
\end{eqnarray}
The functions \(W^j_\pm\) are of boundary layer type and \(\chi\) is a smooth
cut-off function which equals one in a neighbourhood of the point \(x^0\) and
vanishes in the vicinity of the boundary \(\partial\varpi\). The functions
\(U^0_\pm\) are the first two eigenfunctions of the unperturbed problem (cf. formula (\ref{rel2})), namely
\begin{equation}
\label{eigen2}
U^0_{+}(x;\eta)=e^{i\eta z}V_1(y), \quad 
U^0_{-}(x;\eta)=e^{i(\eta-2\pi) z}V_1(y), 
\end{equation}
where \(V_1\) is the first
eigenfunction of the Laplace operator on the cross-section, normalised
in \(L^2(\varpi)\). Clearly, both the functions \eqref{eigen2}
meets the quasi-periodicity conditions \eqref{form7}. 
Note that in the case of the Neumann boundary conditions on
\(\partial \Omega\) the function \(V_1\) is just a constant. One readily
verifies that since \(M_1=0\) is the first eigenvalue of the Laplacian on the
cross-section in the Neumann case, the first couple of eigenvalues of the
unperturbed problem take the form (\ref{dispcurve}).

\subsection{The boundary layers}
\label{subsec: 32}

The boundary layer \(W^1_\pm\) depends on the  ``fast''
variables (``stretched`` coordinates)
\[\xi=(\xi_1,\xi_2,\xi_3)=\epsilon^{-1}(x-x^0)\] 
and is needed to compensate for that the main regular term \(U^0_\pm\)
in the expansion does not fulfil the Neumann type boundary condition
\[\partial_{n(x)}u(x)=0,\, x\in \partial\theta_\epsilon.\] 
By the Taylor formula applied to the function \(U^0_{\pm}\) in the ''slow``
variables \(x\), we have 
\[\partial_{n(\xi)} U^0_{\pm}(x;\eta)=
n(\xi)\cdot \nabla_xU^0_{\pm}(x^0)+O(\epsilon),\,x\in\partial \theta_\epsilon.\] 
Thus \(W^1_\pm\) ought to be a solution of the exterior Neumann problem
\begin{eqnarray}\label{form10}
\Delta_\xi W^1_\pm(\xi)&=&0,\ \xi\in\mathbb{R}^3\setminus \bar{\theta}\,,\\
\partial_{n(\xi)} W^1_\pm(\xi)&:=&G^1_{\pm}(\xi)=-n(\xi)\cdot
\nabla_xU^0_{\pm}(x^0;\eta),\
\xi\in\partial\theta.\nonumber
\end{eqnarray}
Since \(\int_{\partial \theta}n_k(\xi)\,ds(\xi)=0\) for \(k=1,2,3,\)
the function \(G^1_{\pm}\) satisfies the orthogonality condition
\[\int_{\partial\theta}G^1_{\pm}(\xi)\,ds_\xi=0.\]
Thus, the problem (\ref{form10}) admits a solution which decays faster at
infinity than the fundamental solution \(\Phi(\xi)=(4\pi|\xi|)^{-1}\)
of the Laplace operator in \({\mathbb R}^3\). Moreover, the solution has the
asymptotic behaviour 
\begin{equation}\label{form11}
W^1_\pm(\xi)= \sum_{j,k=1}^{3}Q_{jk}\frac{\partial\Phi}{\partial
\xi_k}(\xi) \frac{\partial U^0_{\pm}}{\partial x_j}(x^0;\eta)+O(|\xi|^{-3})
\end{equation}
where \(Q=(Q_{jk})_{j,k=1,2,3}\) is the matrix of the 
virtual mass for the set \(\overline{\theta}\subset\mathbb{R}^3\), 
which is symmetric and negatively definite for any subset \(\theta\) with a positive volume
(see \cite[Appendix G]{PolyaSzego}).

\subparagraph{Remark 3.1} As shown in \cite[Appendix G]{PolyaSzego} (see also \cite{MaNaPl}) the representation
\begin{equation}
 \label{QQ}
Q=-\mathbb{I}_3\textrm{meas}_3(\theta)+\widetilde Q
\end{equation}
is valid, where \(\mathbb{I}_3\) is the \(3\times 3\) identity matrix and \(\widetilde Q\) is non-positive.
The latter is still negative definite in the case \(\textrm{meas}_3(\theta)\neq 0\); 
but degenerates for a straight crack.

In order to find the appropriate boundary value problem for
the second boundary layer term \(W^2_\pm\) we take into account a quadratic
polynomial in the Taylor formula for \(U_\pm^0\):
\[
\partial_a U^0_\pm(x;\eta)=\sum_{j=1}^3 a_j\frac{\partial U^0_\pm}{\partial
x_j}(x^0;\eta)+
\frac{1}{2}\sum_{j,k=1}^3\partial_a(x_jx_k)
\frac{\partial^2U^0_\pm}{\partial x_j\partial x_k}(x^0;\eta)+O(|x-x^0|^2).
\]

In this way, we conclude that the 
second boundary layer term \(W^2_\pm\) should satisfy the following
exterior Neumann problem:
\begin{eqnarray*}\label{form12}
-\Delta_\xi W^2_\pm(\xi)&=& 0,\
\xi\in \Xi=
\mathbb{R}^3\setminus \overline{\theta},\\
\partial_{n(\xi)} W^2_\pm(\xi)&=&-\frac12\sum_{j,k=1}^{3}
\frac{\partial(\xi_j\xi_k)}{\partial n(\xi)}
\frac{\partial^2U^0_\pm}{\partial x_j\partial x_k}(x^0;\eta),\
\xi\in\partial\theta.\nonumber
\end{eqnarray*}
The function \(W^2_\pm\) admits the representation 
\begin{equation}
\label{number}
W^2_\pm(\xi)= c_{\pm}\rho^{-1}+O(\rho^{-2}), \quad \rho=|\xi|\to \infty\,.
\end{equation}
To calculate the coefficient \(c_\pm\) in (\ref{number}) we use the
Green formula in the domain \(\Xi_R=B_R(0)\setminus \overline{\theta}\) 
where \(B_R(0)\) is a ball of radius \(R\) centred at \(\xi=0\) to
obtain
\begin{eqnarray}
\label{calc-c-1}
0=\int_{\Xi_R}\Delta_\xi W^2_\pm(\xi)\,d\xi
&=& \int_{\partial \Xi_R}\partial_{n(\xi)}W^2_{\pm}(\xi)ds_\xi\nonumber\\
&=&\int_{\mathbb{S}_R}\frac{\partial W^2_{\pm}}{\partial \rho}(\xi)ds_\xi
+\int_{\partial\theta}\partial_{n(\xi)}W^2_{\pm}
(\xi)ds_\xi\nonumber\\
&=&\int_{\mathbb{S}_R}\frac{\partial W^2_\pm}{\partial\rho}(\xi)\, ds_\xi\\
&-&\frac12\sum_{j,k=1}^3\frac{\partial^2 U^0_\pm}{\partial x_j\partial x_k}(x^0)
\int_{\partial\theta}
\frac{\partial(\xi_j\xi_k)}{\partial n(\xi)}\,ds_\xi \nonumber\,.
\end{eqnarray}
Notice that on the sphere \(\mathbb{S}_R=\partial B_R(0)\) we have
\(\partial_{n(\xi)}=\partial_\rho\).

Applying the asymptotic expansion (\ref{number}), we see that in
(\ref{calc-c-1})
\begin{equation*}
\label{4pic}
\lim_{R\to+\infty}\int_{\mathbb{S}_R}\frac{\partial
W^2_\pm}{\partial\rho}(\xi)\, ds_\xi
= \lim_{R\to+\infty}\int_{\mathbb{S}_R}
\left(-\frac{c_\pm}{\rho^2}+O(\rho^{-3})\right)\,ds_\xi
= -4\pi c_{\pm}\,.
\end{equation*}
Using the Green formula again but now inside \(\theta\), note that the normal
becomes inward then, we derive the relation
\begin{eqnarray*}
\label{I_3}
&-&\frac{1}{2}\sum_{j,k=1}^3\frac{\partial^2 U^0_\pm}{\partial x_j\partial
x_k}(x^0)\int_{\partial\theta}
\partial_{n(\xi)}(\xi_j\xi_k)\,ds_\xi\nonumber\\
&=&\frac{1}{2}\sum_{j,k=1}^3\frac{\partial^2 U^0_\pm}{\partial x_j\partial
x_k}(x^0)\int_{\theta}
\Delta(\xi_j\xi_k)\,d\xi
=\sum_{j=1}^3\frac{\partial^2 U^0_\pm}{\partial x_j^2}(x^0)\textrm{meas}_3(\theta)\\
&=&\Delta U^0_\pm(x^0)\textrm{meas}_3(\theta)=-
\Lambda_\pm^0(\eta) U^0_\pm(x^0)\textrm{meas}_3(\theta)\,.\nonumber
\end{eqnarray*}
In the last equality we also have used the fact that \(U^0_\pm\) is
the eigenfunction of the unperturbed problem corresponding to the eigenvalue
\(\Lambda^0_{\pm}(\eta)\) ( see (\ref{form5})).

Collecting the above results we finally
obtain the formula 
\begin{equation}
\label{calc-c-4}
4\pi c_\pm=-\Lambda^0_{\pm}(\eta) U^0_\pm(x^0)\textrm{meas}_3\theta.
\end{equation}

\subsection{The regular correction}
\label{subsec:33}

For the calculation of the main correction term \(\Lambda_{\pm}'(\eta)\)
in the asymptotic expansion of \(\Lambda^\epsilon_\pm(\eta)\) we substitute the
asymptotic ans\"atze (\ref{ansatz1}) and (\ref{ansatz2}) into the spectral
problem (\ref{spctrlprob1})-(\ref{bndrycond}). By (\ref{form11}), (\ref{number}) and (\ref{calc-c-4})
the boundary layer terms \(W^1_\pm(\epsilon^{-1}(x-x^0);\eta)\) and
\(W^2_\pm(\epsilon^{-1}(x-x^0);\eta)\) have the following behaviour:
\begin{eqnarray}
\label{W1x}
W^1_\pm\left(\frac{1}{\epsilon}(x-x^0);\eta\right)&=&
\epsilon^2\sum_{j,k=1}^{3} Q_{jk} \frac{\partial \Phi}{\partial x_k}(x-x^0)\frac{\partial U^0_\pm}{\partial
x_j}(x^0;\eta)\\
&+&O(\epsilon^3|x-x^0|^{-3}),\nonumber
\end{eqnarray}
\begin{equation}
\label{W2x}
W^2_\pm(\epsilon^{-1}(x-x^0);\eta)=\epsilon c_\pm |x-x^0|^{-1}
+O(\epsilon^2|x-x^0|^{-2})\,.
\end{equation}
Hence we notice that in (\ref{ansatz2}) the contribution of the boundary layer
terms written in the ''slow`` variables looks as follows:
\begin{equation}
\label{eps4}
\epsilon^1W^1_\pm(\epsilon^{-1}(x-x^0);\eta)+\epsilon^2W^2_\pm(\epsilon^{-1}
(x-x^0);\eta)=\epsilon^3w_\pm(x;\eta)+O(\epsilon^4|x-x^0|^{-3})\,.
\end{equation} 
In other words, the main regular correction is of order \(\epsilon^3\). 
We write the term \(w_\pm(x)\) as a sum
\begin{equation}\label{www1}w_\pm(x)=w^1_\pm(x)+w^2_\pm(x),\end{equation}                                                  
where   
\begin{eqnarray}
\label{w_1}
w^1_\pm(x)=
-\sum_{j,k=1}^3Q_{jk}\frac{\partial U^0_{\pm}}{\partial
x_j}(x^0)\frac{x_k-x_k^0}{4\pi|x-x^0|^3},
\,
w^2_\pm(x)=\frac{c_{\pm}}{|x-x^0|}.
\end{eqnarray}

Now substituting the asymptotic ansatz (\ref{ansatz2}) for \(U^\epsilon_{\pm}\) 
in the equation (\ref{spctrlprob1}) we get,
using that \(U^0_\pm\) is the solution of unperturbed problem,
\begin{eqnarray*}
-\Delta_x U^\epsilon_{\pm}(x;\eta)&=&
\Lambda^0_{\pm}(\eta)U^0_{\pm}(x;\eta)-
\epsilon^3\Delta_x(\chi(x)w_\pm(x;\eta)+U'_{\pm}(x;\eta))\\
&&+O(\epsilon^4|x-x^0|^{-3}). 
\end{eqnarray*}
On the other hand, since \(\Lambda^\epsilon_\pm(\eta)\) is assumed to be an
eigenvalue, we also have the relationship
\begin{eqnarray*}
-\Delta_x U^\epsilon_{\pm}(x;\eta)&=&
(\Lambda^0_\pm(\eta)+\epsilon^3\Lambda'_\pm(\eta))
(U^0_\pm(x;\eta)+\epsilon^3(\chi(x)w_\pm(x)+U'_{\pm}(x;\eta)))\\
&&+O(\epsilon^4|x-x^0|).
\end{eqnarray*}
Thus, setting the coefficients on \(\epsilon^3\) equal, we obtain the 
following equation:
\begin{eqnarray}
\label{u'}
-\left(\Delta_x+\Lambda^0_{\pm}(\eta)\right)U'_\pm(x;\eta)
=\Lambda'_\pm(\eta)U^0_\pm(x;\eta)\\
+\left(\Delta_x+\Lambda^0_{\pm}(\eta)\right)
\left(\chi(x)w_\pm(x;\eta)\right), \quad x\in \varpi \nonumber
\end{eqnarray}
with the boundary and quasi-periodicity conditions
\begin{eqnarray}
\label{u'_bound}
\partial_n U'_{\pm}(x;\eta)&=&0,\, x\in \gamma,\\
\label{u'_bound_per}
U'_{\pm} (y,0; \eta)&=&e^{-i\eta}U'_{\pm} (y,1;\eta),\, y\in\gamma_0,\\
\partial_z U'_{\pm}(y,0;\eta)&=&e^{-i\eta}\partial_z U'_{\pm} (y,1; \eta),\, y\in\gamma_0.\nonumber
\end{eqnarray}

By the Fredholm alternative the boundary
value problem (\ref{u'})-(\ref{u'_bound_per}) has a solution \(U'_\pm\) 
if and only if the right hand side
of the equation (\ref{u'}) is orthogonal to the eigenfunction \(U^0_\pm\)
in \(L^2(\varpi)\). The normalisation \(\left\|U^0_\pm;L^2(\varpi)\right\|=1\) 
then yields the expression for the correction term
\begin{eqnarray}
\label{l'}
\Lambda'_\pm(\eta)&=&\Lambda'_\pm(\eta)\int_{\varpi}
\overline{U^0_\pm(x;\eta)}U^0_\pm(x;\eta)\,dx\\
&=&-\int_\varpi \overline{U^0_\pm(x;\eta)}
\left(\Delta_x+\Lambda^0_\pm(\eta)\right)
\left(\chi(x)w_\pm(x;\eta)\right)\,dx\nonumber\,.
\end{eqnarray}
Recalling that the functions in (\ref{w_1}) are harmonic, we have 
\[
 (\Delta_x+\Lambda^0_\pm(\eta))(\chi(x)w_\pm(x;\eta))=O(|x-x^0|^{-2}).
\]
Hence the last integral in (\ref{l'}) converges absolutely, and therefore it can be
calculated as a limit of an integral over the set \(\varpi\setminus
B_r(x^0)\) as \(r\to +0\). Since the cut-off
function \(\chi\) vanishes at the boundary of \(\varpi\) we obtain by the Green 
formula that
\begin{equation}
\label{L}
\Lambda'_\pm(\eta)= 
-\lim_{r\to +0}\int_{{\mathbb S}_r}\left(
\overline{U^0_\pm(x;\eta)}\partial_{n(x)} 
w_\pm(x;\eta)-w_\pm(x;\eta)\overline{\partial_{n(x)} U^0_\pm(x;\eta) }\right)\,ds.
\end{equation}

Substituting \(w_\pm\) in (\ref{L}) we get
\[\Lambda'_\pm(\eta)=K_1+K_2+K_3+K_4\]
where
\begin{eqnarray*}
K_j&=&-\lim_{r\to
0}\int_{\mathbb{S}_r(x^0)}\overline{U^0_\pm(x;\eta)}\partial_{n(x)} w^j_\pm(x;\eta)
ds_x,\,j=1,2,\\
K_{j+2}&=&\lim_{r\to 0}\int_{\mathbb{S}_r(x^0)}w^j_\pm(x;\eta)
\overline{\partial_{n(x)} U^0_\pm(x;\eta)}ds_x,\,j=1,2.
\end{eqnarray*}
             
The term \(K_1\) is computed by a direct integration:
\begin{eqnarray*}
K_1&=&\sum_{j,k=1}^3Q_{jk}\frac{\partial U^0_{\pm}}{\partial x_j}(x^0;\eta)
\lim_{r\to +0}
\int_{{\mathbb S}_r(0)} 
\overline{U^0_\pm(x^0+x;\eta)}
\partial_{n(x)}\frac{x_k}{4\pi|x|^3}\,ds_x\\
&=&\sum_{j,k=1}^3Q_{jk}\frac{\partial U^0_{\pm}}{\partial x_j}(x^0;\eta)
\lim_{r\to +0}
\int\limits_{{\mathbb S}_r(0)} 
\overline{U^0_\pm(x^0+x;\eta)}
\left(\frac{2x_k}{4\pi r^4}\right) \,ds_x\\
&=&
\frac{2}{3}(\nabla_xU_\pm^0(x^0;\eta))^\top Q \nabla_x
\overline{U_\pm^0(x^0;\eta)}\,.
\end{eqnarray*}

Since 
\(\partial_{n(x)} w^2_\pm(x;\eta)=-\partial_\rho \left(c_{\pm}\rho^{-1}\right)+O(\rho^{-3})\) 
on \( {\mathbb S}_r(x^0)\) (see (\ref{number}) we have
\begin{multline*}
K_2=\lim_{r\to 0}\int_{{\mathbb S}_r(x^0)}
\overline{U^0_\pm(x;\eta)}\frac{\partial}{\partial\rho}
\left(\frac{c_{\pm}}{|x|}\right) \,d\varphi\\
=-\lim_{r\to 0}\int_{ {\mathbb S}_r(x^0)}
\overline{U^0_\pm(x;\eta)}c_{\pm}r^{-2}d\varphi
=-4\pi c_\pm \overline{U^0_\pm(x^0;\eta)}.
\end{multline*}

For the computation of \(K_3\) we write
\begin{equation*}
w^1_\pm(x)\overline{\partial_{n(x)} U^0_\pm(x;\eta)}
=
\sum_{j,k=1}^3Q_{jk}\frac{\partial U^0_\pm}{\partial x_j}(x^0;\eta)
\frac{x_k-x_k^0}{4\pi|x-x^0|^4} (x-x^0)\overline{\nabla_xU^0_\pm(x;\eta)}\,.
\end{equation*}
This yields us
\begin{eqnarray*}
K_3&=&
\sum_{j,k=1}^3Q_{jk}\frac{\partial U^0_\pm}{\partial x_j}(x^0;\eta)
\lim_{\rho\to 0}\int\limits_{ {\mathbb S}_\rho(0)}
\frac{x_k^2}{4\pi|x|^4}\overline{\frac{\partial U^0_\pm}{\partial
x_k}(x^0+x;\eta)}\,ds_x\\
&=&
\frac{1}{3}\nabla_xU^0_\pm(x^0;\eta)^\top Q
\overline{\nabla_xU^0_\pm(x^0;\eta)}\,.
\end{eqnarray*}
Finally, it is easy to see that
\begin{equation*}
K_4=-\lim_{\rho\to +0}
\int_{{\mathbb S}_\rho(x^0)} 
\frac{c}{|x-x^0|}\overline{\partial_{n(x)} U^0_\pm(x;\eta)}\,ds=0\,.
\end{equation*}
Collecting the previous results we finally obtain the
formula for the main asymptotic correction term
\[\Lambda'_\pm(\eta)=F_{\pm}(\eta)\]
where
\begin{eqnarray*}
\label{maincorr}
F_{\pm}(\eta)&=&
\nabla_xU^0_\pm(x^0;\eta)^\top Q \overline{\nabla_xU^0_\pm(x^0;\eta)}
\\
&+&\left(M_1 +(\eta-\pi\pm\pi)^2\right)\left|U^0_\pm(x^0;\eta)\right|^2\textrm{meas}_3(\theta)\,.
\end{eqnarray*}

The remainder \(\widetilde{\Lambda}^\epsilon_\pm(\eta)\) in the asymptotic
expansion \eqref{ansatz1} will be estimated in \(\S 4\).
Note that \(F_+(\pi)=F_-(\pi)\) and denote it by \(F_0\), so that
\begin{equation}
\label{F_0}
F_0=\nabla_xU^0_\pm(x^0;\pi)^\top Q \overline{\nabla_xU^0_\pm(x^0;\pi)}
+\left(M_1 +\pi^2\right)\left|U^0_\pm(x^0;\pi)\right|^2\textrm{meas}_3(\theta).
\end{equation}

\subsection{Perturbation: \(\eta\approx\pi\)}
\label{subsec:34}
In this section we will study the behaviour of the eigenvalues
\(\Lambda^\epsilon_\pm(\eta)\) when \(\eta\) is close to \(\pi\).  For the unperturbed
problem, i.e. when \(\epsilon=0\), the eigenvalue
\(\Lambda_\pm^0(\pi)=M_1+\pi^2\)
has multiplicity two. Due to this change of the multiplicity our analysis
becomes slightly different. However, we may use the previous calculations.

In the vicinity of \(\eta=\pi\) we apply the formula \eqref{psi}. 
In this case the asymptotic ans\"atze
(\ref{ansatz1}) and (\ref{ansatz2}) are still valid but their entries
depend on the deviation parameter \(\psi\). Moreover the main regular part
of the expansion becomes the linear combination of the eigenfunctions
\(U^0_{+}\) and \(U^0_{-}\):
\begin{eqnarray*}
\mathbf{U}^0_\pm(x;\pi,\psi)
&=&a_1^{\pm}(\psi)U^0_+(x;\pi)+a_2^\pm(\psi)U^0_-(x;\pi)\nonumber\\
&=&\left(a_1^\pm(\psi)e^{\pi i z}+
a_2^\pm(\psi)e^{-\pi i z}\right)V_1(y)
\end{eqnarray*}
where the coefficient vector
\(\mathbf{a}^\pm(\psi)=(a_1^\pm(\psi),a_2^\pm(\psi))^\top\)
is to be determined. Without loss of generality we may assume that 
\(|\mathbf{a}^\pm|=1\). 
By the same argument as in the previous section, we have
the boundary layers \(\mathbf{W}^1\)and \(\mathbf{W}^2\) as 
in (\ref{W1x}) and (\ref{W2x}). However, in this case they 
are the linear combinations 
\begin{eqnarray}\label{WWW}
\mathbf{W}^j(\xi;\pi,\psi)&=&a^\pm_1(\psi)W^j_{+}(\xi;\pi)+a^\pm_2(\psi)W^j_{-}(\xi;\pi),\,j=1,2.
\end{eqnarray}
Then equation (\ref{u'}) takes the form
\begin{eqnarray}\label{u'psi}
(-\Delta_x&-&\Lambda^0(\pi))\mathbf{U}'_\pm(x;\pi,\psi)=
\Lambda'(\pi,\psi)\mathbf{U}^0_\pm(x;\pi,\psi)\nonumber\\
&-&\left(-\Delta_x- (M_1+\pi^2)\right)\left(\chi(x)\mathbf{w}(x;\pi,\psi)\right),\
x\in\varpi\setminus x^0,
\nonumber
\end{eqnarray}
where the \(\mathbf{w}(x;\pi,\psi)\) is calculated for the boundary layer terms (\ref{WWW}) according to the 
formulae (\ref{eps4}) and (\ref{www1}).
The boundary and quasi-periodicity conditions (\ref{u'_bound}) and
(\ref{u'_bound_per})
turn into following:
\begin{eqnarray}
\label{u'psi_bound}
\partial_n \mathbf{U}'_{\pm}(x;\pi,\psi)&=&0, \quad x\in \gamma,\\
\label{u'psi_bound_per}
\mathbf{U}'_{\pm} (y,0;\pi,\psi)&=&e^{-i\pi}\mathbf{U}'_{\pm} (y,1;\pi,\psi)-i\psi e^{-i\pi}
\mathbf{U}^0_\pm(y,1;\pi,\psi),  
y\in\gamma_0 \\
\partial_z \mathbf{U}'_{\pm} (y,0;\pi,\psi)&=&
e^{-i\pi}\partial_z \mathbf{U}'_{\pm} (y,1;\pi,\psi)-i\psi e^{-i \pi}\partial_z
\mathbf{U}^0_\pm(y,1;\pi,\psi), 
 y\in\gamma_0.\nonumber
\end{eqnarray}
Note that the extra terms on the 
right-hand side of \eqref{u'psi_bound_per} is due to 
the original quasi-periodicity conditions \eqref{percond}
and the Taylor formula
\[e^{-i(\pi+\psi\epsilon^3)}=e^{-i\pi}(1-i\psi\epsilon^3+O(\epsilon^6)),\,
\epsilon\to 0.\]
To find \(\Lambda'(\pi,\psi)\) we multiply equation \eqref{u'psi} by
\(\overline{U^0_+(x;\pi)}\)
and integrate over \(\varpi\)
\begin{eqnarray}
\label{59}
\int_\varpi
\overline{U^0_+(x;\pi)}(-\Delta_x&-&\Lambda^0(\pi))\mathbf{U}'_\pm(x;\pi,\psi)\,dx=
\Lambda'(\pi,\psi)a_1^\pm (\psi) \\
&-&\int_\varpi\overline{U^0_+(x;\pi)}\left(-\Delta_x-
(M_1+\pi^2)\right)(\chi(x)\mathbf{w}(x;\pi,\psi))\,dx.\nonumber
\end{eqnarray}
The integral on the right can be calculated in a similar way as in the previous
section and it will provide the formula
\begin{eqnarray}
&-&\int_\varpi \overline{U^0_+(x;\pi)}\left(-\Delta_x-
(M_1+\pi^2)\right)\left(\chi(x)\mathbf{w}(x;\pi,\psi)\right)\,dx
=a_1^\pm(\psi)F_0+ \nonumber\\
&+&
a_2^\pm(\psi)\left(\nabla_xU^0_-(x^0;\pi)^\top Q\overline{\nabla_xU^0_+(x^0;\pi)}
-(M_1+\pi^2)U^0_-(x^0;\pi)\overline{U^0_+(x^0;\pi)}\right)\nonumber\\
&=&a_1^\pm(\psi)F_0+a_2^\pm(\psi)F_1,\nonumber
\end{eqnarray}
where \(F_0\) is given in \eqref{F_0} and
\begin{eqnarray}
\label{F_1}
F_1&=&\nabla_xU^0_-(x^0;\pi)^\top Q\overline{\nabla_xU^0_+(x^0;\pi)}\\
&+&(M_1+\pi^2)U^0_-(x^0;\pi)\overline{U^0_+(x^0;\pi)}\textrm{meas}_3(\theta).\nonumber
\end{eqnarray}
The integral on the left hand side of \eqref{59} is calculated by means of the Green
formula
\begin{eqnarray*}
&&\int_\varpi
\overline{U^0_+(x;\pi)}(-\Delta_x-\Lambda^0(\pi))\mathbf{U}'_\pm(x;\pi,\psi)\,dx\nonumber \\
&&=\int_{\partial\varpi}(\mathbf{U}'_\pm(x;\pi,\psi)\overline{\partial_n U^0_{+}(x;\pi)}-
\partial_n \mathbf{U}'_\pm(x;\pi,\psi)\overline{U^0_{+}(x;\pi)})\,ds_x=:I\,.
\end{eqnarray*}
For calculating this surface integral we take into account the Neumann conditions
\eqref{u'psi_bound}
for \(\mathbf{U}'_\pm\) and \eqref{form6} for \(U^0_+\) on \(\gamma\),
the quasi-periodicity conditions \eqref{form7} and \eqref{u'psi_bound_per} to obtain
\begin{eqnarray}
I=&&\int_\gamma \big(-\overline{\partial_z U^0_+(y,0;\pi)} \mathbf{U}'_\pm(y,0;\pi,\psi)
+\overline{U^0_+(y,0;\pi)}\partial_z \mathbf{U}'_\pm(y,0;\pi,\psi)\big) dy \nonumber\\
+&&\int_\gamma \big(
\overline{\partial_z U^0_+(y,1;\pi)} \mathbf{U}'_\pm(y,1;\pi,\psi)-
\overline{U^0_+(y,0;\pi)}\partial_z \mathbf{U}'_\pm(y,1;\pi,\psi)\big) dy \nonumber\\
=&& 2\pi\psi a_1^\pm(\psi) \int_\gamma |V_1(y)|^2\,dy =2\pi\psi
a_1^\pm(\psi).\nonumber
\end{eqnarray}
In last equality we have used the normalisation condition for \(V_1\) and the trivial
equality
\(\|U^0_+;L^2(\varpi)\|=\|V_1;L^2(\gamma)\|\).

Thus the first equation for \(\Lambda'(\pi,\psi)\) takes form
\begin{equation*}
\label{Lambda'-first}
a_1^\pm(\psi)(F_0-2\pi\psi)+a_2^\pm(\psi)F_1=\Lambda'(\pi,\psi)a_1^\pm(\psi).
\end{equation*}
Applying the same procedure for $U^0_-$, we get the second equation  
\begin{equation*}
\label{Lambda'-second}
-a_1^\pm(\psi)\overline{F_1}
+a_2^\pm(\psi)(F_0+2\pi\psi)=\Lambda'(\pi,\psi)a_2^\pm(\psi).
\end{equation*}
Finally, we have noticed that the coefficients \(a_1^\pm(\psi)\) and \(a_2^\pm(\psi)\) satisfy the
system of algebraic equations written in the matrix form
\begin{equation}
\label{sys a}
\left(\begin{matrix}
-2\pi\psi+F_0& F_1\\
-\overline{F_1}&2\pi \psi+F_0
\end{matrix}
\right)
\left(
\begin{matrix}
a_1^\pm(\psi)\\a_2^\pm(\psi)
\end{matrix}
\right)
=\Lambda'_\pm(\pi;\psi)
\left(
\begin{matrix}
a_1^\pm(\psi)\\a_2^\pm(\psi)
\end{matrix}
\right) .
\end{equation}
In other words, the main asymptotic corrections to the eigenvalues
\(\Lambda^0_\pm(\eta)\)
are eigenvalues of the matrix on the left-hand side of (\ref{sys a}).
At the end a simple computation provides us the formula
\begin{equation}
\label{L'psi}
\Lambda'_\pm(\pi,\psi)=F_0 \pm\sqrt{4\pi^2\psi^2+|F_1|^2}\,,
\end{equation}
where \(F_0\) and \(F_1\) are given in \eqref{F_0} and \eqref{F_1}.
We emphasise that in the case \(F_1\ne 0\) the graphs of the eigenvalues
\(\Lambda^\epsilon_\pm(\pi+\psi\epsilon^3)\) given by \eqref{ansatz1}
and \eqref{L'psi} look like the ones in fig. 5c.

\subsection{Final formulas}
\label{subsec:35}

For the Neumann problem (\ref{form1})-(\ref{form2}) we have \(M_1=0\), 
\(V_1(y)=\textrm{meas}_2(\omega)^{-\frac12}\) and
\[U^0_{\pm}(x;\pi)=e^{\pm i\pi z}(\textrm{meas}_2(\omega))^{-\frac12}.\]
Substituting these into the formulae (\ref{F_0}) and (\ref{F_1}) we obtain
\begin{equation*}
F_0=\pi^2 \frac{Q_{33}+\textrm{meas}_3(\theta)}{\textrm{meas}_2(\omega)},\,
|F_1|=\pi^2 \frac{-Q_{33}+\textrm{meas}_3(\theta)}{\textrm{meas}_2(\omega)}.
\end{equation*}
By Remark 3.1 it follows that \(-Q_{33}>\textrm{meas}_3(\theta)\) and hence
\begin{equation}
 \label{oulu1}
F_0<0,\,|F_1|>0.
\end{equation}

However, for the mixed boundary value problem (\ref{dir-1})
\[
U^0_{\pm}(x;\pi)=e^{\pm i\pi z}V_1(y),
\]
where \(\{M_1,V_1\}\) is the first eigenpair of the Dirichlet problem (\ref{S4}). 
In this case the coefficients \(F_0\) and \(F_1\) are
\begin{eqnarray}\label{oulu2}
F_0&=&\nabla_y V_1(y^0)^\top Q'\nabla_y V_1(y^0)+(M_1+\pi^2(Q_{33}+\textrm{meas}_3(\theta))|V_1(y^0)|^2\\
F_1&=&e^{-2\pi i z_0}
\big[\nabla_y V_1(y^0)^\top Q'\nabla_y V_1(y^0)+
(M_1-\pi^2(Q_{33}-\textrm{meas}_3(\theta))|V_1(y^0)|^2 \big]\nonumber\\
&-&i2\pi e^{-2\pi i z_0}\{Q_{31}\frac{\partial V_1}{\partial y_1}(y^0)+
Q_{32}\frac{\partial V_1}{\partial y_2}(y^0)\}V_1(y^0)\nonumber
\end{eqnarray}
where \(Q'\) is the \(2\times 2\) upper left block of the 
virtual mass matrix \(Q\) for the set \(\overline{\theta}\subset
\mathbb{R}^3\).

\section{Justification of the asymptotics and existence of a spectral gap}
\label{sec:4}

In this section we will first estimate the difference
\(\Lambda_\pm^\epsilon(\eta)-\Lambda_\pm^0(\eta)\) 
outside the \(C\epsilon^{\frac72}\)-neighbourhood of the point \(\eta=\pi\) and
show that the perturbed eigenvalues \(\Lambda_\pm^\epsilon(\eta)\) are close to
the eigenvalues of the unperturbed problem. This is a rather standard step in 
the asymptotic analysis of eigenvalues in singularly perturbed domains. 
The second step is more elaborated.

In order to prove the opening of the spectral gap we have to derive approriate estimates of the remainder
in the asymptotic expansion of \(\Lambda_\pm^\epsilon(\eta)\) constructed in section 3
inside the \(C\epsilon^{\frac72}\)-neighbourhood of the point \(\eta=\pi\). 
To this end, apply the following classical
lemma on ''near eigenvalues and eigenvectors`` (see
\cite{ViLu} and also, e.g., \cite[Ch. 5]{BiSo}).

\begin{lemma}
\label{nearEig}
Let \({\cal B}\) be a selfadjoint, positive, and compact operator in Hilbert
space \({\cal H}\) with the inner product \((\cdot,\cdot)_{\cal H}\). If there
exists a number \(b>0\) and an element \({\cal W}\in {\cal H}\) such that
\(\|{\cal W}\|_{\cal H}=1\) and \(\|{\cal BW}-b{\cal W}; {\cal H}
\|=\tau\in(0,b)\), then the segment \([b-\tau,b+\tau]\) contains at least one
eigenvalue of \({\cal B}\).
\end{lemma}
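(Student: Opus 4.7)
The plan is to argue by contradiction, exploiting the full spectral decomposition available for a compact selfadjoint operator. Assuming that the interval $[b-\tau,b+\tau]$ contains no eigenvalue of $\mathcal{B}$, I will derive a lower bound on $\|\mathcal{B}\mathcal{W}-b\mathcal{W}\|_{\mathcal{H}}$ strictly larger than $\tau$, contradicting the hypothesis.

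The first step invokes the Hilbert--Schmidt theorem for compact selfadjoint operators: $\mathcal{H}$ admits a complete orthonormal basis $\{e_k\}_{k\ge 1}$ of eigenvectors of $\mathcal{B}$ with real eigenvalues $\{b_k\}$, all non-negative by the positivity of $\mathcal{B}$, with $0$ as the only possible accumulation point. Expanding the test element as $\mathcal{W}=\sum_k c_k e_k$ with $c_k=(\mathcal{W},e_k)_{\mathcal{H}}$, the normalisation $\|\mathcal{W}\|_{\mathcal{H}}=1$ gives $\sum_k |c_k|^2=1$ via Parseval.

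The second step is the direct computation $\mathcal{B}\mathcal{W}-b\mathcal{W}=\sum_k (b_k-b)\,c_k e_k$, which by orthonormality yields $\|\mathcal{B}\mathcal{W}-b\mathcal{W}\|_{\mathcal{H}}^2=\sum_k (b_k-b)^2|c_k|^2$. Under the contradiction hypothesis, the distance of each $b_k$ from $b$ exceeds $\tau$ strictly, so the right-hand side is bounded below by $\tau^2\sum_k|c_k|^2=\tau^2$, contradicting $\|\mathcal{B}\mathcal{W}-b\mathcal{W}\|_{\mathcal{H}}=\tau$.

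There is essentially no obstacle in this argument, as the statement is a textbook corollary of the spectral theorem; the only subtlety worth noting is the role of the restriction $\tau<b$, which guarantees that any eigenvalue located in $[b-\tau,b+\tau]$ is strictly positive and therefore genuinely in the discrete spectrum produced by compactness, consistent with the positivity assumption on $\mathcal{B}$.
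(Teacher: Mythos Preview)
Your argument is correct. The paper does not actually give its own proof of this lemma; it is simply quoted as a classical result with references to Vi\v{s}ik--Ljusternik and to \cite[Ch.~5]{BiSo}, and your spectral-decomposition argument is precisely the standard proof one finds in those sources.
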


\subsection{\(\eta\) far from \(\pi\)}
\label{subsec:41}

In this section we will prove the following theorem
\begin{theorem}
For any \(\alpha> 0\) there exist \(C,\,C(\alpha)>0\) and \(\epsilon_\alpha>0\), such
that for all \(\epsilon\in[0,\epsilon_\alpha)\)
\begin{equation*}
\label{NN}
\Lambda_k^0(\eta)-C_\alpha\epsilon^{3-\alpha}\leq\Lambda_k^\epsilon(\eta)\leq
\Lambda_k^0(\eta)+C\epsilon^3 ,\,k=1,2.
\end{equation*}
\end{theorem}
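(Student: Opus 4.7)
The statement comprises two inequalities with asymmetric loss---$\epsilon^3$ on the upper side and $\epsilon^{3-\alpha}$ on the lower---so I would attack them independently. The upper bound follows directly from the Poincar\'e min-max principle with the unperturbed eigenfunctions as trial functions, whereas the lower bound uses Lemma~\ref{nearEig} applied to the resolvent of $T^\epsilon(\eta)$, with the quasimode supplied by the complete asymptotic ansatz built in Section~\ref{sec: 3}.

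For the upper bound I would use the variational characterisation
\[
 \Lambda^\epsilon_k(\eta)\;=\;\inf_{\substack{E\subset H^1_\eta(\varpi_\epsilon)\\ \dim E=k}}\;\sup_{u\in E\setminus\{0\}}\frac{\|\nabla u\|^2_{L^2(\varpi_\epsilon)}}{\|u\|^2_{L^2(\varpi_\epsilon)}}
\]
with the trial subspace $E=\textrm{span}\{U^0_1,\ldots,U^0_k\}$. These functions satisfy the quasi-periodicity conditions \eqref{form7} and are smooth in a neighbourhood of $x^0$, so they restrict to $H^1_\eta(\varpi_\epsilon)$. For any combination $u=\sum c_j U^0_j$ normalised by $\|u\|_{L^2(\varpi)}=1$, the quantities $\|u\|^2_{L^2(\theta^0_\epsilon)}$ and $\|\nabla u\|^2_{L^2(\theta^0_\epsilon)}$ are bounded by $C|\mathbf{c}|^2\,\textrm{meas}_3(\theta^0_\epsilon)=O(\epsilon^3)$ thanks to the uniform smoothness of the $U^0_j$ near $x^0$ and $\textrm{meas}_3(\theta^0_\epsilon)=\epsilon^3\textrm{meas}_3(\theta)$. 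Subtracting these from the $\varpi$-integrals perturbs the Rayleigh quotient by $O(\epsilon^3)$ uniformly in $u\in E$, yielding $\Lambda^\epsilon_k(\eta)\leq\Lambda^0_k(\eta)+C\epsilon^3$ with constant $C$ independent of $\alpha$.

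For the lower bound I would apply Lemma~\ref{nearEig} to $\mathcal{B}^\epsilon:=(T^\epsilon(\eta)+1)^{-1}$---compact, positive, and self-adjoint on $L^2(\varpi_\epsilon)$ by the compact embedding $H^1_\eta(\varpi_\epsilon)\hookrightarrow L^2(\varpi_\epsilon)$---with $\mathcal{W}$ being the $L^2(\varpi_\epsilon)$-normalisation of the complete ansatz
\[
 \mathcal{W}^\epsilon_k\;=\;U^0_k+\epsilon\chi W^1_k(\epsilon^{-1}(\cdot-x^0))+\epsilon^2\chi W^2_k(\epsilon^{-1}(\cdot-x^0))+\epsilon^3 U'_k
\]
and $b=\bigl(1+\Lambda^0_k(\eta)+\epsilon^3\Lambda'_k(\eta)\bigr)^{-1}$. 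The identity
\[
 \mathcal{B}^\epsilon\mathcal{W}^\epsilon_k-b\mathcal{W}^\epsilon_k\;=\;-b\,\mathcal{B}^\epsilon\bigl(T^\epsilon(\eta)-\Lambda^0_k(\eta)-\epsilon^3\Lambda'_k(\eta)\bigr)\mathcal{W}^\epsilon_k,
\]
combined with $\|\mathcal{B}^\epsilon\|_{L^2\to L^2}\leq 1$, reduces the estimate of $\tau:=\|\mathcal{B}^\epsilon\mathcal{W}^\epsilon_k-b\mathcal{W}^\epsilon_k\|_{L^2(\varpi_\epsilon)}$ to the $L^2(\varpi_\epsilon)$-norm of the residual of $\mathcal{W}^\epsilon_k$ in the perturbed eigenvalue equation (interpreted in the form sense, so no requirement that $\mathcal{W}^\epsilon_k$ lie in $\mathrm{Dom}(T^\epsilon)$). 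By construction of $W^1_k,W^2_k$ via \eqref{form10}--\eqref{calc-c-4} and of $U'_k$ via \eqref{u'}--\eqref{u'_bound_per}, the discrepancies at formal orders $\epsilon,\epsilon^2,\epsilon^3$ cancel, so the residual is bounded by $C_\alpha\epsilon^{3-\alpha}$. Lemma~\ref{nearEig} furnishes an eigenvalue $\nu^\epsilon$ of $\mathcal{B}^\epsilon$ with $|\nu^\epsilon-b|\leq C_\alpha\epsilon^{3-\alpha}$; inversion yields an eigenvalue $\mu^\epsilon=(\nu^\epsilon)^{-1}-1$ of $T^\epsilon(\eta)$ within $C'_\alpha\epsilon^{3-\alpha}$ of $\Lambda^0_k(\eta)+\epsilon^3\Lambda'_k(\eta)$, hence within $C''_\alpha\epsilon^{3-\alpha}$ of $\Lambda^0_k(\eta)$. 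Since for $\eta$ in a compact subset of $[0,2\pi)\setminus\{\pi\}$ the eigenvalues $\Lambda^0_1(\eta),\Lambda^0_2(\eta),\Lambda^0_3(\eta)$ are mutually separated, an ordering argument combined with the already-established upper bound identifies $\mu^\epsilon=\Lambda^\epsilon_k(\eta)$, finishing the lower estimate.

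The main obstacle is the rigorous $L^2$-estimate of the ansatz residual. Because the boundary layers decay only algebraically in the fast variable ($W^1_k\sim|\xi|^{-2}$ and $W^2_k\sim|\xi|^{-1}$ as $|\xi|\to\infty$), the commutators of $-\Delta$ with the cutoff $\chi$ acting on these algebraic tails, together with the interaction between $U'_k$ and the matching far-field terms $w_k^j$ of \eqref{w_1}, produce integrals that are only borderline convergent at three-dimensional scaling; logarithmic factors of the form $|\ln\epsilon|^{1/2}$ naturally appear, and precisely these are absorbed into the arbitrary $\epsilon^{-\alpha}$. Performing the estimate cleanly---most naturally in weighted Kondrat'ev-type spaces centred at $x^0$ with a small exponent chosen to dominate the logarithms---is the technical core of the argument and what distinguishes this lower bound from the straightforward upper estimate.
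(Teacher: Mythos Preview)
Your upper-bound argument coincides with the paper's: both restrict the unperturbed eigenfunctions $U_1^0,\dots,U_k^0$ to $\varpi_\epsilon$ and use the min--max principle, the $O(\epsilon^3)$ perturbation of the Rayleigh quotient coming from $\mathrm{meas}_3(\theta^0_\epsilon)=\epsilon^3\mathrm{meas}_3(\theta)$.

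Your lower-bound argument, however, has a genuine gap. Lemma~4.1 applied to your ansatz $\mathcal{W}_k^\epsilon$ only tells you that \emph{some} eigenvalue of $T^\epsilon(\eta)$ lies within $C_\alpha\epsilon^{3-\alpha}$ of $\Lambda_k^0(\eta)$; it does not tell you that $\Lambda_k^\epsilon(\eta)$ itself does. The ``ordering argument combined with the already-established upper bound'' that you invoke cannot close this: knowing $\Lambda_j^\epsilon\le\Lambda_j^0+C\epsilon^3$ for $j\le k$ and knowing there exists an eigenvalue near each $\Lambda_j^0$ is perfectly compatible with, say, $\Lambda_1^\epsilon$ being far below $\Lambda_1^0$ while $\Lambda_2^\epsilon$ sits in the window near $\Lambda_1^0$ and $\Lambda_3^\epsilon$ in the window near $\Lambda_2^0$. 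To rule this out you must show that $T^\epsilon(\eta)$ has \emph{no} eigenvalues below $\Lambda_1^0(\eta)-C_\alpha\epsilon^{3-\alpha}$, and a quasimode construction on the perturbed side cannot deliver that.

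The paper avoids this difficulty by running the variational principle in the opposite direction: it takes the \emph{perturbed} eigenfunction $u_1^\epsilon$, extends it across the hole $\theta^0_\epsilon$ to a function $\widetilde u_1^\epsilon\in H^1_\eta(\varpi)$ (splitting off the mean on $\partial\theta$ and using a fixed $H^1$-extension operator for the mean-zero part), and plugs $\widetilde u_1^\epsilon$ into the min principle for $\Lambda_1^0(\eta)$. This yields
\[
\Lambda_1^0(\eta)\;\le\;\Lambda_1^\epsilon(\eta)+C\,\|\nabla_x u_1^\epsilon;L^2(B_{R\epsilon}\setminus\theta_\epsilon)\|^2,
\]
and the local energy on the right is controlled by the weighted a~priori estimate of Lemma~4.4, $\|u_1^\epsilon;V^2_{\beta,0}(\varpi_\epsilon)\|\le C_\beta\Lambda^\epsilon\|u_1^\epsilon;L^2\|$ with $\beta=-\tfrac12+\tfrac{\alpha}{2}$, giving the factor $\epsilon^{2(1-\beta)}=\epsilon^{3-\alpha}$. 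So the $\alpha$-loss here comes from the admissible range $\beta\in(-\tfrac12,\tfrac12)$ in the Kondrat'ev estimate, not from logarithms in a residual bound. This argument uses neither the asymptotic ansatz nor Lemma~4.1, is uniform in $\eta\in[0,2\pi)$ (no exclusion of $\eta=\pi$ is needed), and directly produces the inequality $\Lambda_1^0(\eta)\le\Lambda_1^\epsilon(\eta)+C_\alpha\epsilon^{3-\alpha}$; the case $k=2$ is handled the same way via the max--min principle.
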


The following lemma is a direct
consequence of the one-dimensional Hardy inequality:
if \(u(b)=0\), then
\begin{equation*}
\label{hardy}
\int_a^b |u(r)|^2\,dr\leq 4\int_a^b  |\partial_r u(r)|^2r^2\,dr \,.
\end{equation*}

\begin{lemma}
There exists a positive constant \(C=C(\theta, \varpi,x^0)\) such that the
following inequality holds:
\[
\left\|(r+\epsilon)^{-1}u;L^2(\varpi_\epsilon)\right\|^2\leq
C\left(\left\|\nabla_x
u;L^2(\varpi_\epsilon)\right\|^2
+\|u;L^2(\varpi\setminus\mathcal{U})\|^2\right)\,,
\]
where $r=|x-x_0|$ and \(\mathcal{U}\subset \varpi \) is a fixed
neighbourhood of the point \(x^0\), independent of \(\epsilon\).
\end{lemma}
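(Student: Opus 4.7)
The plan is to split $\varpi_\epsilon$ into a fixed ball around $x^0$ and its complement and treat each piece with the appropriate tool. Fix $R>0$ small enough that $\overline{B_R(x^0)}\subset\varpi$ and, for $\epsilon$ small enough, $\overline{\theta_\epsilon^0}\subset B_{R/4}(x^0)$; choose the neighbourhood $\mathcal{U}=B_{R/2}(x^0)$. On the ``far'' piece $\varpi_\epsilon\setminus B_{R/2}$ the bound $(r+\epsilon)^{-2}\le 4/R^2$ is immediate, so this piece contributes at most $4R^{-2}\|u;L^2(\varpi\setminus\mathcal{U})\|^2$ to the left-hand side of the claimed inequality.

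On the ``near'' piece $B_{R/2}\cap\varpi_\epsilon$ I would introduce a smooth cutoff $\chi$ with $\chi\equiv 1$ on $B_{R/2}(x^0)$ and $\chi\equiv 0$ outside $B_R(x^0)$, and switch to spherical coordinates $(r,\omega)$ centered at $x^0$. For each $\omega\in\mathbb S^2$, the function $r\mapsto(\chi u)(x^0+r\omega)$ is defined on an interval ending at $r=R$, where it vanishes. Assuming for clarity that $\theta$ is star-shaped with respect to the origin, this interval is $[r_*(\omega,\epsilon),R]$, where $r_*(\omega,\epsilon)$ is the radius at which the ray exits $\theta_\epsilon^0$; the one-dimensional Hardy inequality quoted just before the lemma then yields
\[
\int_{r_*}^{R}|\chi u|^2\,dr\le 4\int_{r_*}^{R}|\partial_r(\chi u)|^2\,r^2\,dr.
\]
To recover the target weight, I use the elementary bound $r^2/(r+\epsilon)^2\le 1$ to reinsert it and integrate over $\mathbb S^2$ to return to Cartesian integrals:
\[
\int_{B_{R/2}\cap\varpi_\epsilon}\frac{|u|^2}{(r+\epsilon)^2}\,dx
\le\int_{B_R\cap\varpi_\epsilon}\frac{|\chi u|^2}{(r+\epsilon)^2}\,dx
\le 4\int_{B_R\cap\varpi_\epsilon}|\partial_r(\chi u)|^2\,dx.
\]
Expanding $|\partial_r(\chi u)|^2\le 2|\nabla u|^2+2|\nabla\chi|^2|u|^2$ and noting that $|\nabla\chi|$ is supported in $B_R\setminus B_{R/2}\subset\varpi\setminus\mathcal{U}$ bounds the right-hand side by $C\bigl(\|\nabla u;L^2(\varpi_\epsilon)\|^2+\|u;L^2(\varpi\setminus\mathcal{U})\|^2\bigr)$. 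Adding the far-field estimate completes the proof.

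The one point to verify carefully is that the constant does not depend on $\epsilon$; this is the case because $R$, $\chi$, and $\mathcal{U}$ are fixed independently of $\epsilon$, and the one-dimensional Hardy inequality is valid uniformly in the left endpoint $r_*(\omega,\epsilon)\ge 0$. If $\theta$ is not star-shaped with respect to the origin, then a ray may meet $\varpi_\epsilon$ in several intervals; the Hardy argument applies verbatim to the outermost one (ending at $r=R$), while the remaining ``interior'' intervals have length $O(\epsilon)$ and are handled by a uniform Poincaré estimate whose constants are absorbed into the $\|\nabla u\|^2$ term. This technicality is the only delicate point of the argument.
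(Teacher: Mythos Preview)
Your approach is exactly what the paper has in mind: the paper gives no detailed proof at all and simply declares the lemma ``a direct consequence of the one-dimensional Hardy inequality'' quoted just above it. Your near/far decomposition with a radial cutoff and the 1D Hardy bound applied along rays is the natural way to flesh this out, and for $\theta$ star-shaped with respect to the origin the argument is complete and correct.

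The one genuine gap is your handling of the ``interior'' radial intervals when $\theta$ is not star-shaped. On such an interval $[a,b]\subset(0,C\epsilon)$ the function $\chi u$ satisfies no boundary condition (the condition on $\partial\theta_\epsilon^0$ is Neumann), so a Poincar\'e inequality cannot control $\int_a^b|\chi u|^2\,dr$ by $\int_a^b|\partial_r(\chi u)|^2 r^2\,dr$ alone---a nonzero constant along the ray is a counterexample. A clean repair is to bypass the ray-by-ray argument on the perforated ball: extend $u$ across $\theta_\epsilon^0$ by the rescaled Stein extension for the fixed exterior domain $\mathbb{R}^3\setminus\overline{\theta}$ (subtracting a constant first so that only the gradient norm enters, exactly as the paper does later in the proof of Theorem~4.2). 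The resulting extension $\tilde u$ satisfies $\|\nabla\tilde u;L^2(\theta_\epsilon^0)\|\le C\|\nabla u;L^2(B_{R\epsilon}\setminus\theta_\epsilon^0)\|$ with $C$ independent of $\epsilon$, and your Hardy argument applied to $\chi\tilde u$ on the full ball $B_R$ then goes through without any hole to worry about. With this adjustment the proof is complete.
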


We also need an a priori estimate for the eigenfunctions
of the problem \eqref{spctrlprob1}-\eqref{percond} in the singularly
perturbed domain \(\varpi_\epsilon\). The method developed in \cite{MaNaPl} gives
such estimates in weighted Kondratiev norms
\begin{equation}
\label{norm1}
\left\|u^\epsilon;V_{\beta}^l(\varpi_\epsilon)\right\|^2=
\sum_{k=0}^l
\int_{\varpi_\epsilon}r^{2(\beta-l+k)}
\left|\nabla_x^ku^\epsilon(x)\right|^2\, dx\,.
\end{equation}
and the step-weighted norms
\begin{equation}
\label{norm2} 
\left\|U^\epsilon;V_{\beta,0}^2(\varpi_\epsilon)\right\|=
\left(\|\nabla_xU^\epsilon; V^1_\beta(\varpi_\epsilon)\|^2+
\|U^\epsilon;V^0_{\beta-1}(\varpi_\epsilon)\|^2\right)^{1/2}.
\end{equation}
For the Neumann problem, the latter norm provides asymptotically
sharp estimates and we formulate this estimate with references
to the general result in \cite[Ch. 4 and 5]{MaNaPl} and paper \cite{na235}
where the detailed explanation of the method is given for a concrete 
boundary value problem.

\begin{lemma} 
Let \(U^\epsilon\) be an eigenfunction of problem
\eqref{spctrlprob1}-\eqref{percond} with the eigenvalue \(\Lambda^\epsilon\).
For any \(\beta\in (-\frac{1}{2},\frac12)\) the inequality
\[
\left\|U^\epsilon; V^2_{\beta,0}(\varpi_\epsilon)\right\|\leq
C_\beta\Lambda_\epsilon
\left\|U^\epsilon;L^2(\varpi_\epsilon)\right\|
\] 
is valid while a constant \(C_\beta\) is independent of \(U^\epsilon\)
and \(\epsilon\in (0,\epsilon_0],\, \epsilon_0>0\). At the same time 
\(C_\beta\to +\infty\) as \(\beta\to \pm \frac{1}{2}\).
\end{lemma}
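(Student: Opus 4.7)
The strategy follows the general framework of \cite{MaNaPl}: combine standard elliptic regularity away from the void with a rescaling and Kondratiev-type estimate for the model exterior problem, and patch the two contributions with a partition of unity. Fix a smooth cut-off $\chi_{\mathrm{in}}$ supported in a ball $B_\delta(x^0)$ of $\epsilon$-independent radius and equal to $1$ on $B_{\delta/2}(x^0)$, and set $\chi_{\mathrm{out}}=1-\chi_{\mathrm{in}}$. On the support of $\chi_{\mathrm{out}}$ the domain $\varpi_\epsilon$ coincides with the fixed smooth region $\varpi\setminus B_{\delta/2}(x^0)$; standard $H^2$-regularity for the Neumann/quasi-periodic problem gives $\|U^\epsilon\|_{H^2}\leq C(\Lambda^\epsilon+1)\|U^\epsilon;L^2(\varpi_\epsilon)\|$, and since $r\geq\delta/2$ on this set the weights $r^{2\beta}$ and $r^{2(\beta-1)}$ are comparable to $1$, so the outer contribution to the $V^2_{\beta,0}$-norm is bounded by $C\Lambda^\epsilon\|U^\epsilon;L^2(\varpi_\epsilon)\|$.

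The inner piece I would handle in the stretched variables $\xi=\epsilon^{-1}(x-x^0)$. Writing $u(\xi)=U^\epsilon(x^0+\epsilon\xi)$, the equation becomes $-\Delta_\xi u=\epsilon^2\Lambda^\epsilon u$ on $\Xi_\epsilon:=\epsilon^{-1}(B_\delta(x^0)-x^0)\setminus\overline{\theta}$ with the homogeneous Neumann condition on $\partial\theta$ and inhomogeneous data on the outer sphere inherited from the outer estimate. As $\epsilon\to 0$ the sets $\Xi_\epsilon$ exhaust $\Xi=\mathbb{R}^3\setminus\overline\theta$, and for $\beta\in(-\tfrac12,\tfrac12)$ one has the model Kondratiev estimate
\begin{equation*}
\|v;V^2_\beta(\Xi)\|\leq C_\beta\bigl(\|\Delta v;V^0_\beta(\Xi)\|+\|\partial_n v;W^{1/2}_\beta(\partial\theta)\|\bigr),
\end{equation*}
which is valid because the critical line $\mathrm{Re}\,\lambda=-\tfrac12-\beta$ of the operator pencil of the Laplacian at infinity lies in the strip $(-1,0)$ and therefore avoids the pencil's spectrum $\{n,-n-1:n\in\mathbb{N}_0\}$; the endpoints $\beta=\pm\tfrac12$ are excluded because there the critical line hits the constants ($\lambda=0$) or the fundamental solution $1/|\xi|$ ($\lambda=-1$), a nontrivial kernel appears, and the estimate fails. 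Undoing the rescaling, the Jacobian $\epsilon^3$ and the weight factors $\epsilon^{2\beta}$ and $\epsilon^{2(\beta-1)}$ conspire, together with the one-step drop on the $|U^\epsilon|^2$ term built into $V^2_{\beta,0}$, to yield an $\epsilon$-uniform bound on the inner contribution by $C_\beta\Lambda^\epsilon\|U^\epsilon;L^2(\varpi_\epsilon)\|$.

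The main obstacle is uniformity of all constants, both across the glueing and across the family $\epsilon\to 0$. Commutators $[\Delta,\chi_{\mathrm{in}}]U^\epsilon$ produce terms supported in the annulus $\delta/2\leq r\leq\delta$, where $r$ is bounded away from $0$, and these are harmless; but to absorb the lower-order piece $\|U^\epsilon;V^0_{\beta-1}(\varpi_\epsilon)\|$ by the gradient norm one invokes the Hardy-type inequality of the preceding lemma, whose weighted form in three dimensions requires exactly $\beta\in(-\tfrac12,\tfrac12)$ and degenerates at each endpoint. Uniformity of the model-domain Kondratiev constant as the outer boundary of $\Xi_\epsilon$ recedes to infinity is secured by a standard compactness argument: a hypothetical blow-up of the constants along some sequence $\epsilon_k\to 0$ would, after normalising and passing to a weak limit in $V^2_\beta(\Xi)$, produce a nontrivial solution of the homogeneous exterior Neumann problem in a weight range where the operator is injective, a contradiction. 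Combining these three ingredients yields the stated estimate, and both mechanisms forcing $C_\beta\to+\infty$ as $\beta\to\pm\tfrac12$ are located at the same two pencil eigenvalues $\lambda=0,-1$.
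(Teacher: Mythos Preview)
The paper does not actually prove this lemma: it is stated with a reference to \cite[Ch.~4 and 5]{MaNaPl} and to \cite{na235}, and no argument is given beyond that. Your sketch is precisely an outline of the method those references develop, so there is nothing to compare at the level of strategy --- partition of unity, interior $H^2$-regularity where $r$ is bounded below, and a rescaled Kondratiev estimate near the void, with the weight range dictated by the pencil eigenvalues $\lambda=0$ and $\lambda=-1$.

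One point in your write-up deserves tightening. The displayed model estimate
\[
\|v;V^2_\beta(\Xi)\|\leq C_\beta\bigl(\|\Delta v;V^0_\beta(\Xi)\|+\|\partial_n v;W^{1/2}_\beta(\partial\theta)\|\bigr)
\]
is stated in the \emph{plain} Kondratiev scale, but for the exterior Neumann problem this cannot hold as written throughout $\beta\in(-\tfrac12,\tfrac12)$: constants solve the homogeneous problem and lie in $V^2_\beta(\Xi)$ for every $\beta<\tfrac12$, so the operator has a one-dimensional kernel there. This is exactly why the paper introduces the step-weighted (detached) norm $V^2_{\beta,0}$ in \eqref{norm2}: replacing the $V^0_{\beta-2}$ control of $u$ by $V^0_{\beta-1}$ expels constants from the space on the whole interval $(-\tfrac12,\tfrac12)$ and restores an isomorphism. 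You allude to this ``one-step drop'' in passing, but it should appear already in the model exterior estimate, not only in the rescaling bookkeeping; otherwise the inner bound does not follow. With that correction your outline matches the cited method.
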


\textbf{Proof of Theorem 4.2}:

According to the max-min principle (cf. \cite[Thm. 10.2.2]{BiSo})
\begin{equation}
\Lambda_k^\epsilon(\eta)=\sup_{E_k} \inf_{u\in E_k}\frac{\|\nabla_x
u;L^2(\varpi_\epsilon)\|^2}
{\|u;L^2(\varpi_\epsilon)\|^2}\,,
\end{equation}
where supremum is taken over all subspaces 
\(E_k\subset H^1_{\eta}(\varpi_\epsilon)\) with co-dimension \(k-1\).
Every such \(E_k\) contains the linear combination of 
eigenfunctions \(v=a_1U_1^0+a_2U_2^0+\ldots+ a_k U_k^0\) with \(|a_1|^2+\cdot+|a_k|^2=1\), so in the case
\(c_k\epsilon^3\leq 1\) we conclude
\begin{eqnarray*}
\Lambda_k^\epsilon(\eta)&&\leq
\frac{\|\nabla_x v;L^2(\varpi_\epsilon)\|^2}
{\|v;L^2(\varpi_\epsilon)\|^2}\leq
\frac{\|\nabla_x v;L^2(\varpi)\|^2}
{\|v;L^2(\varpi)\|^2-\|v;L^2(\theta_\epsilon)\|^2}
\\
&&\leq\frac{
|a_1|^2\Lambda^0_1(\eta)+|a_2|^2\Lambda^0_2(\eta)+\ldots+|a_k|^2\Lambda^0_k(\eta)}
{|a_1|^2+|a_2|^2+\ldots+|a_k|^2-c_k\epsilon^3}\leq
\frac{\Lambda_k^0(\eta)}{1-c_k\epsilon^3}\\
&\leq& \Lambda^0_k(\eta)(1+C_k\epsilon^3).                                         
\end{eqnarray*}

Next we give a proof of the first inequality of the statement for
\(\Lambda^\epsilon_1\). By the minimum principle \cite[Thm. 10.2.1]{BiSo}, we have
\[
\Lambda_1^0(\eta)=
\inf_{v\in H^1_{\eta}(\varpi)}
\frac{\left\|\nabla_x v;L^2(\varpi)\right\|^2 }{\left\|v;L^2(\varpi)\right\|^2}.
\]
Replacing $v$ by the eigenfunction \(u_1^\epsilon\) normalised to 1 in
\(L^2(\varpi_\epsilon)\)  is not possible and we construct an extension
of function $u_1^\epsilon$ as follows. 
Consider the function \(w(\epsilon^{-1}(x-x^0))=u_1^\epsilon(x)\) which can be
written as
\[w(\xi)=w^\perp(\xi)+w_0,\]
where \(w^\perp$ is orthogonal to 1 in \(L^2(\partial \theta)\). Since
\(\partial\theta\) is smooth,
function \(w^\perp\) can be extended to a function 
\(\widetilde{w}^\perp\in H^1(\theta)\) by some fixed 
continuous linear extension operator such that the inequality
\begin{eqnarray}\label{ww}
\|\nabla_\xi\widetilde{w}^\perp;L^2(\theta)\|&\leq&C\|w^\perp;H^1(B_R\setminus\theta)\|\\
&\leq&C\|\nabla_\xi w^\perp;L^2(B_R\setminus\theta)\|=C\|\nabla_\xi w;L^2(B_R\setminus\theta)\|\nonumber
\end{eqnarray}
is valid. Setting
\(\widetilde{u}_1^\epsilon(x):=\widetilde{w}^\perp(\epsilon^{-1}(x-x^0))+w_0\) we derive from (\ref{ww})
\begin{equation*}\label{uu}
\|\nabla_x\widetilde{u}^\epsilon_1;L^2(\theta_\epsilon)\|\leq C 
\|\nabla_x u^\epsilon_1;L^2(B_{R\epsilon}\setminus\theta_\epsilon)\|.
\end{equation*}
Now we have
\begin{eqnarray*}\label{inequality}
\Lambda_1^0(\eta)&\leq& 
\frac{\left\|\nabla_x u_1^\epsilon;L^2(\varpi_\epsilon)\right\|^2+
\left\|\nabla_x
\widetilde{u}_1^\epsilon;L^2(\theta_\epsilon)\right\|^2}
{\left\|u_1^\epsilon;L^2(\varpi_\epsilon)\right\|^2+
\left\|\widetilde{u}_1^\epsilon;L^2(\theta_\epsilon)\right\|^2}\nonumber\\
&\leq&{\Lambda_1^\epsilon(\eta)+
\left\|\nabla_x\widetilde{u}_1^\epsilon;L^2(\theta_\epsilon)\right\|^2}\\
&\leq& 
\Lambda_1^\epsilon(\eta) + C \left\|\nabla_x 
u_1^\epsilon;L^2(B_{R\epsilon}\setminus\theta_\epsilon)\right\|^2\nonumber
\end{eqnarray*}
Owing to Lemma 4.4 and the definitions (\ref{norm1}) and (\ref{norm2}) we have    
\begin{eqnarray*}
\|\nabla_x u^\epsilon_1;L^2(B_{R\epsilon)}\setminus\theta_\epsilon)\|^2&\leq&
c\epsilon^{2(1-\beta)}\|\tau^{\beta-1}\nabla_x u^\epsilon_1;L^2(B_{R\epsilon}\setminus\theta_\epsilon)\|^2\\
&\leq& c\epsilon^{2(1-\beta)}\|u^\epsilon_1;V_{\beta,0}^2(\varpi_\epsilon)\|^2,
\end{eqnarray*}
where we can take \(\beta=-\frac12+\frac\alpha2\in(-\frac12,\frac12)\) and obtain the first estimate. The
second inequality for the eigenvalues \(\Lambda_k^\epsilon(\eta)\) is proved in
a similar manner but with an application of the max-min principle.

\subsection{\(\eta\approx\pi\)}
\label{subseq:42} 

Theorem 4.2 ensures the following statement: For any
small positive \(\eta_0\) and \(m_0\) one can find \(\epsilon_0=\epsilon(\eta_0,m_0)>0\)
such that for all \(\epsilon\in [0,\epsilon_0)\) and
\(\eta\in(\pi-\eta_0,\pi+\eta_0)\) the interval
\((M_1+(\pi-\eta_0)^2-m_0,M_1+(\pi-\eta_0)^2+m_0)\) contains either two simple 
eigenvalues, or one eigenvalue with multiplicity two.

The immediate objective becomes to justify the asymptotics of 
the eigenvalues constructed in Section 3.

In Hilbert space \({\cal H}^\epsilon(\eta):=H^1_\eta(\varpi_\epsilon)\) 
with scalar product
\[
(u,v)_{H^1_\eta(\varpi_\epsilon)}=(\nabla_x u,\nabla_x v)_{\varpi_\epsilon}
\]
we define the operator
\(\mathcal{B}^\epsilon(\eta)\) by the formula
\begin{equation}\label{BB}
(\mathcal{B}^\epsilon(\eta)u,v)_{{\cal H}^\epsilon(\eta)}=(u,v)_{
\varpi_\epsilon},\, u,v\in {\cal H}^\epsilon(\eta)\,.
\end{equation}
The operator is self-adjoint, positive and compact. Thus the spectrum
of \(\mathcal{B}^\epsilon(\eta)\) consists of the point
$\beta_0=0$ implying the essential spectrum and of the positive decreasing sequence of
eigenvalues
\begin{equation}\label{seq}
\beta_1^\epsilon(\eta)\geq \beta_2^\epsilon(\eta)\geq \ldots\geq
\beta_j^\epsilon(\eta)\geq\ldots
\end{equation} 
The variational formulation of the spectral problem (\ref{spctrlprob1})
\begin{equation*}
(\nabla U^\epsilon,\nabla V)_{\varpi^\epsilon}=\Lambda^\epsilon(\eta)(U^\epsilon,
V)_{\varpi_\epsilon},\,
V\in {\cal H}^\epsilon(\eta),
\end{equation*}
gives the following relationship between the eigenvalues of the operator
\(\mathcal{B}^\epsilon(\eta)\) and the boundary value problem
(\ref{spctrlprob1})-(\ref{percond}) 
\begin{equation}\label{beta}
\Lambda_j^\epsilon(\eta)=\beta_j^\epsilon(\eta)^{-1}\,.
\end{equation}
The relationship (\ref{beta}) turns the sequence (\ref{seq}) into the sequence (\ref{seq-Lambda}).

We now apply Lemma 4.1 to the operator (\ref{BB}). To this end we set 
\begin{eqnarray}\label{AA2}
b_{\pm}&=&(\ell_\pm)^{-1}=(\Lambda_\pm^0(\pi)+\epsilon^3\Lambda'_{\pm}(\pi,
\psi))^{-1},\\
\mathcal{V}_\pm(x)&=&\|\mathcal{U}_\pm;
{\cal H}^\epsilon(\pi+\psi\epsilon^3)\|^{-1}\mathcal{U}_\pm(x),\nonumber
\end{eqnarray}
where
\begin{eqnarray}\label{AA1}
\mathcal{U}_\pm(x)&=&X_\epsilon(x)\mathbf{U}^0_\pm(x;\pi,\psi)+
(1-X_\epsilon(x))\mathbf{P}^0_\pm(x^0;\pi,\psi) \\
&+&\chi_\theta(x)\mathcal{W}_\pm(x;\pi,\psi)+\epsilon^3X_\epsilon(x)\mathbf{U}'_\pm(x;\pi,\psi)
+\epsilon^6 \mathbf{U}''(x;\psi),\nonumber
\end{eqnarray}
and
\begin{eqnarray*}
\mathbf{P}^0_\pm(x;\pi,\psi)&=&\mathbf{U}^0_\pm(x^0;\pi,\psi)+
\sum_{j=1}^3x_j\frac{\partial \mathbf{U}^0_\pm}{\partial x_j}(x^0;\pi,\psi)\\
&+&\frac{1}{2}\sum_{j,k=1}^3x_jx_k\frac{\partial^2 \mathbf{U}^0_\pm}{\partial
x_jx_k}(x^0;\pi,\psi),\\
\mathcal{W}_\pm(x;\pi,\psi)&=&\epsilon
\mathbf{W}^1_\pm(\epsilon^{-1}(x-x^0);\pi,\psi)+\epsilon^2\mathbf{W}^2_\pm(\epsilon^{-1}
(x-x^0);\pi,\psi)\,.
\end{eqnarray*}
Here the cut-off function \(X_\epsilon\) is defined by
\[X_\epsilon(x)=1-\chi_\theta(\epsilon^{-1}(x-x^0))\] 
where \(\chi_\theta\in C_0^\infty(\mathbb{R}^3)\) 
is such that $\chi_\theta=1$ in the neighbourhood of the set
\(\overline{\theta}\). The term \(\mathbf{U}''\in H^1(\varpi)\) in the sum is added 
to satisfy the quasi-periodicity condition (\ref{percond}) with \(\eta=\pi^2+\epsilon^3\psi\). 
Due to the definition of the functions $W^1_\pm$, $W^2_\pm$, \(X_\epsilon\),
\(U''\) and
\(\chi_\theta\) the function \(\mathcal{V}_\pm\) belongs to space
\(\mathcal{H}^\epsilon(\pi+\psi\epsilon^3)\).

To use Lemma 4.1 we have to verify that
\(b_\pm=(\ell_\pm)^{-1}\) and
\(\left\|U^0_\pm;{\mathcal{H}^\epsilon(\pi+\psi\epsilon^3)}\right\|\)
are separated from zero, and then we have to estimate 
\[\tau_\pm=\left\|\mathcal{BW}_\pm-b\mathcal{W}_\pm;
{\mathcal{H}^\epsilon(\pi+\psi\epsilon^3)}\right\| \]
for \(\epsilon>0\) small enough. We proceed with the following assertion 
 
\begin{lemma}
For all \(\epsilon>0\) there holds the inequality
\begin{equation}
\label{U}
\left|\left\|\mathcal{U}_\pm;{\mathcal{H}^\epsilon(\pi+\psi\epsilon^3)}
\right\|
-(M_1+\pi^2)^{1/2}|\mathbf{a}^\pm|_{{\mathbb C}^2}\right|\leq 
c\epsilon^{3/2}(1+\epsilon^{3/2}\psi).
\end{equation}
\end{lemma}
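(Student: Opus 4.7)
The plan is to compute $\|\mathcal{U}_\pm;\mathcal{H}^\epsilon(\pi+\psi\epsilon^3)\| = \|\nabla_x \mathcal{U}_\pm; L^2(\varpi_\epsilon)\|$ by isolating the dominant contribution of $\mathbf{U}^0_\pm$ and bounding every corrector in $\mathcal{U}_\pm - \mathbf{U}^0_\pm$ in the $\mathcal{H}^\epsilon$-norm, then invoking the reverse triangle inequality $\bigl|\,\|\mathcal{U}_\pm\|-\|\mathbf{U}^0_\pm\|\,\bigr|\le\|\mathcal{U}_\pm-\mathbf{U}^0_\pm\|$.

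For the leading term I use the explicit representation $\mathbf{U}^0_\pm(x;\pi,\psi)=(a_1^\pm e^{i\pi z}+a_2^\pm e^{-i\pi z})V_1(y)$ from (\ref{eigen2}). Exploiting the orthogonality $\int_0^1 e^{2\pi i z}\,dz=0$ over one period and the identity $\|\nabla_y V_1; L^2(\omega)\|^2 = M_1\|V_1; L^2(\omega)\|^2 = M_1$ (which follows from the eigenvalue equation (\ref{model-problem}) or (\ref{S4}) with the normalization), a direct computation produces
$$\|\nabla_x \mathbf{U}^0_\pm; L^2(\varpi)\|^2 = \bigl(|a_1^\pm|^2+|a_2^\pm|^2\bigr)(M_1+\pi^2) = |\mathbf{a}^\pm|^2_{\mathbb{C}^2}(M_1+\pi^2).$$
Since excising $\theta_\epsilon$ removes a set of measure $O(\epsilon^3)$ on which $|\nabla_x\mathbf{U}^0_\pm|^2$ is uniformly bounded, I obtain $\|\mathbf{U}^0_\pm;\mathcal{H}^\epsilon\| = (M_1+\pi^2)^{1/2}|\mathbf{a}^\pm|_{\mathbb{C}^2}+O(\epsilon^{3/2})$.

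The four correctors in $\mathcal{U}_\pm - \mathbf{U}^0_\pm$ are then bounded one by one. The piece $(1-X_\epsilon)(\mathbf{P}^0_\pm-\mathbf{U}^0_\pm)$ is supported in a ball of radius $O(\epsilon)$ where Taylor's formula yields $|\mathbf{P}^0_\pm-\mathbf{U}^0_\pm|=O(|x-x^0|^3)$, so its $H^1$-norm is $O(\epsilon^{5/2})$. The boundary-layer term $\chi_\theta\mathcal{W}_\pm$ is the principal source of error: by the chain rule $\nabla_x\bigl(\epsilon^j\mathbf{W}^j_\pm(\epsilon^{-1}(x-x^0))\bigr) = \epsilon^{j-1}(\nabla_\xi\mathbf{W}^j_\pm)(\xi)$ with $\xi=\epsilon^{-1}(x-x^0)$, and the change of variables $dx=\epsilon^3 d\xi$ converts its $L^2$-norm squared into $\epsilon^3$ times an integral over $\mathbb{R}^3\setminus\overline\theta$ that converges by the decay $|\nabla_\xi\mathbf{W}^1_\pm|=O(|\xi|^{-3})$ (from (\ref{form11})) and $|\nabla_\xi\mathbf{W}^2_\pm|=O(|\xi|^{-2})$ (from (\ref{number})). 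This produces precisely the target $O(\epsilon^{3/2})$. The regular corrector $\epsilon^3 X_\epsilon\mathbf{U}'_\pm$ contributes $O(\epsilon^{5/2})$: $\mathbf{U}'_\pm$ has a gradient of order $|x-x^0|^{-2}$ at $x^0$ (from $w_\pm$ in (\ref{w_1})) and the cut-off $X_\epsilon$ restricts integration to $|x-x^0|\gtrsim\epsilon$, so that $\int |x-x^0|^{-4}\,dx = O(\epsilon^{-1})$. The quasi-periodicity corrector $\epsilon^6\mathbf{U}''(x;\psi)$ is constructed to absorb the residual defect $-i\epsilon^3\psi e^{-i\pi}\mathbf{U}^0_\pm$ arising from $e^{-i(\pi+\epsilon^3\psi)}=e^{-i\pi}(1-i\epsilon^3\psi+O(\epsilon^6\psi^2))$ beyond what is compensated by $\mathbf{U}'_\pm$; its $H^1$-norm is $O(\epsilon^3\psi)$, yielding the $\epsilon^{3/2}\cdot\epsilon^{3/2}\psi$ piece of the bound.

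The main obstacle is the sharp quantitative control of the boundary-layer term, which is the only contribution of exactly the declared order $\epsilon^{3/2}$: its bound relies crucially on the decay of $\mathbf{W}^1_\pm,\mathbf{W}^2_\pm$ at infinity and on the three-dimensional fact that $|\xi|^{-3}$ and $|\xi|^{-2}$ are both square-integrable outside a bounded set (this would fail in dimension two, consistent with footnote 1 of the paper). Secondary care is needed in tracking how $\psi$ propagates through $\mathbf{U}'_\pm$ via the inhomogeneous quasi-periodicity condition (\ref{u'psi_bound_per}) and into $\mathbf{U}''$, so as to produce a factor $1+\epsilon^{3/2}\psi$ linear in $\psi$ for bounded $|\psi|$.
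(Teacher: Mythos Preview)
Your argument follows the paper's proof closely: both compute $\|\nabla_x\mathbf{U}^0_\pm;L^2(\varpi)\|^2=(M_1+\pi^2)|\mathbf{a}^\pm|^2$ directly and then bound each corrector in $\mathcal{U}_\pm-\mathbf{U}^0_\pm$ via the reverse triangle inequality, with the boundary-layer piece $\chi_\theta\mathcal{W}_\pm$ supplying the dominant $O(\epsilon^{3/2})$ remainder. One clarification worth noting: $\mathbf{U}'_\pm$ is the \emph{solution} of (\ref{u'}), not $w_\pm$ itself, and (as the paper uses later) it has only a logarithmic singularity at $x^0$, so that $\mathbf{U}'_\pm\in H^1(\varpi)$ and the contribution of $\epsilon^3 X_\epsilon\mathbf{U}'_\pm$ is actually $O(\epsilon^3)$ --- your $O(\epsilon^{5/2})$ bound, obtained by attributing to $\mathbf{U}'_\pm$ the $|x-x^0|^{-2}$ gradient of $w_\pm$, is cruder than necessary but still well within the target.
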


\begin{proof} Since 
\(\mathbf{U}^0_\pm(x;\pi,\psi)=a_1^\pm(\psi) U^0_{+}(x)+a_2^\pm(\psi) U^0_{-}(x)\), 
\(|\mathbf{a}^\pm|_{\mathbb{C}^2}=1\), is
a solution of the problem (\ref{form5})-(\ref{form7}) for
\(\eta=\pi+\psi\epsilon^3\), we have
\begin{equation*}
\left\|\nabla_x \mathbf{U}_\pm^0;L^2(\varpi)\right\|^2
=(M_1+\pi^2).
\end{equation*}
The expression on left-hand side of the inequality (\ref{U}) is estimated by the
sum of the expressions
\begin{eqnarray*}
&&\left\|\nabla_x \mathbf{U}_\pm^0;{L^2(\theta_\epsilon)}\right\|,\\
&&\left\|(1-X_\epsilon)(\mathbf{U}^0_\pm-\mathbf{P}^0_\pm;
{{\cal
H}^\epsilon(\pi+\psi\epsilon^3)}\right\|, 
\left\|\chi_\theta \mathbf{W}_\pm;{{\cal H}^\epsilon(\pi+\psi\epsilon^3)}\right\|, \\
&&
\left\|\epsilon^3X_\epsilon \mathbf{U}'_\pm;{{\cal
H}^\epsilon(\pi+\psi\epsilon^3)}\right\|,
\left\|\epsilon^6\mathbf{U}'';{{\cal H}^\epsilon(\pi+\psi\epsilon^3)}\right\|.
\end{eqnarray*}                                              

To process the first two norms it is sufficient to observe 
that the supports of the functions 
\(\nabla_x X_\epsilon\) and \(1-X_\epsilon\) are contained in a ball of radius
\(O(\epsilon)\), where the Taylor formula
\begin{equation}\label{Taylor}
\left|\mathbf{U}^0_\pm(x;\pi,\psi)-\mathbf{P}^0_\pm(x;\pi,\psi)\right|\leq
c\left|x-x^0\right|^3\leq C\epsilon^3\,
\end{equation}
is valid.      

For the estimation of the norm 
\(\left\|\chi_\theta \mathbf{W}_\pm;{{\cal H}^\epsilon(\pi+\psi\epsilon^3)}\right\|\)  
we have to take into account the behaviour of the function $\mathbf{W}_\pm$ at infinity. 
A direct calculations leads to the estimate
\begin{eqnarray*}
\left\|\chi_\theta \mathbf{W}_\pm;{\cal H}^\epsilon(\pi+\psi \epsilon^3)\right\|^2& \leq&
c\epsilon^6 \left(\int_{c\epsilon}^C (r^{-6}+(1+|\psi|\epsilon^3)^2
r^{-4})r^2\,dr \right)
|\mathbf{a}^\pm|^2\leq\\ 
&\leq& c\epsilon^{3}(1+\epsilon^{3/2}|\psi|)^2.
\end{eqnarray*}
Finally, using that \(\mathbf{U}'_\pm,\mathbf{U}''\in H^1(\varpi)\), we get the bound
for the last two norms which proves the statement.
\end{proof}

Under the restriction \(\epsilon^{3/2}|\psi|\in(0,\psi_0]\) we conclude
inequalities
\begin{equation*}
\ell_\pm\geq c_\ell>0, \qquad 
\left\|{\cal U}_\pm;{{\cal H}^\epsilon(\pi+\psi\epsilon^3)}\right\|\geq c_{\cal
U}>0\,,
\end{equation*}
with some positive constants \(c_\ell\) and \(c_{\cal U}\), depending on
\(\psi_0\), \(\varpi\) and \(\theta\) only.

For the estimation of \(\tau\) we use the formula for the norm
\begin{eqnarray*}
\tau&=&\left\|\mathcal{B}^\epsilon(\pi+\psi\epsilon^3)\mathcal{V}_\pm-
\ell_\pm\mathcal{V}_\pm;\mathcal{H}^\epsilon(\pi+\psi\epsilon^3)\right\|\\
&=& \max_{Z}\left|\left({\cal B}^\epsilon(\pi+\psi\epsilon^3)
\mathcal{V}_\pm-\ell_\pm\mathcal{V}_\pm, 
Z\right)_{{\cal H}^\epsilon(\pi+\psi\epsilon^3)}\right|\\
&\leq& c_\ell^{-1}c_{\mathcal{U}}^{-1} \max|T_\pm(Z)|\,.
\end{eqnarray*}
Here the maximum is calculated over all functions 
\(Z\in {\cal H}^\epsilon(\pi+\psi\epsilon^3)\)
with the unit norm, and \(T_\pm\) stands for
\begin{eqnarray*}
\label{T}
T_\pm(Z)&=&\left(\nabla_x {\cal U}_\pm, \nabla_x Z\right)_{L^2(\varpi_\epsilon)^3}\\
&-&(\Lambda_\pm^0+\epsilon^3\Lambda'_\pm(\pi,\psi))({\cal
U}_\pm,Z)_{L^2(\varpi_\epsilon)}.\nonumber
\end{eqnarray*}
\(T_\pm(Z)\) can be now represented as a sum of the expressions
\begin{eqnarray}\label{ZZZ}
S^{(1)}_\pm(Z)&=&(-(\Delta_x+\Lambda^0_\pm)\mathbf{U}^0_\pm,X_\epsilon Z)_{\varpi_\epsilon}=0,\nonumber\\
S_\pm^{(2)}(Z)&=&(-[\Delta_x,X_\epsilon](\mathbf{U}^0_\pm-\mathbf{P}^0_\pm),Z)_{\varpi_\epsilon},\nonumber\\
S_\pm^{(3)}(Z)&=&\epsilon(-\Delta_x \mathbf{W}^1_\pm,\chi_\theta Z)_{\varpi_\epsilon}-
(\epsilon\partial_n \mathbf{W}^1_\pm-\nabla_x\mathbf{U}^0_\pm(x^0;\pi,\psi)^\top n,Z )_{\partial\theta_\epsilon}\\
&+&\epsilon^2(-\Delta_x\mathbf{W}^2_\pm,\chi_\theta Z)_{\varpi_\epsilon}-
(\epsilon^2\partial_n \mathbf{W}^2_\pm,Z)_{\partial\theta_\epsilon}\nonumber\\
&-&\frac12\big(\sum_{j,k=1}^3\partial_n(x_jx_k)
\frac{\partial^2 \mathbf{U}^0_\pm}{\partial x_j\partial x_k}(x^0;\pi,\psi),Z\big)_{\partial\theta_\epsilon}\nonumber\\
S_\pm^{(4)}(Z)&=&(-[\Delta_x,\chi_\theta](\epsilon \mathbf{W}^1_\pm+
\epsilon^2\mathbf{W}^2_\pm),Z)_{\varpi_\epsilon}\nonumber\\
&-&(\Lambda^0_\pm+
\epsilon^3\Lambda'_\pm)(\epsilon \mathbf{W}^1_\pm+
\epsilon^2\mathbf{W}^2_\pm,\chi_\theta Z)_{\varpi_\epsilon}\nonumber\\
S^{(5)}_\pm(Z)&=&\epsilon^3 (-(\Delta_x+\Lambda^0_\pm+\Lambda'_\pm)\mathbf{U}'_\pm+
\mathcal{F}_1,Z)_{\varpi\epsilon}\nonumber\\
S^{(6)}_\pm(Z)&=&\epsilon^3(-[\Delta_x,X_\epsilon]\mathbf{U}'_\pm,Z)_{\varpi_\epsilon}
-\epsilon^6\Lambda'_\pm(\mathbf{U}'_\pm,X_\epsilon Z)_{\varpi_\epsilon}
+\epsilon^6(\nabla_x\mathbf{U}'',\nabla_xZ)_{\varpi_\epsilon}\nonumber\\
&-&\epsilon^6(\Lambda^0_\pm+\epsilon^3\Lambda'_\pm)(\mathbf{U}'',Z)_{\varpi_\epsilon}\nonumber
\end{eqnarray}

Here \(\mathcal{F}_1\)  denotes 
the last term in equation (\ref{u'psi}). 
The expressions (\ref{ZZZ}) can be
divided into three types:
\begin{itemize}
\item the terms which appears due to the multiplication the
function \(U^0_\pm\) by the cut-off function,
\item the terms which stem from the asymptotic boundary layers and
\item finally, from terms which are asymptotic smooth corrections of \(U'_\pm\).
\end{itemize}

To estimate the first type of terms it is enough to observe that the supports of
\(\nabla_x
X_\epsilon\) and \(1-X_\epsilon\) are included in a ball of radius
\(O(\epsilon)\),
and to apply the Taylor formula (\ref{Taylor}) in this ball. Hence the H\"older
inequality gives us the upper bound 
\begin{equation*}
c\epsilon^{7/2}\|Z;{H^1(\varpi_\epsilon)}\|
\end{equation*}
for first type terms.

Components of the third type, except for the surface integrals, are evaluated in the
same manner owing to the additional multiplier \(\epsilon^3\).  
The surface integral
\(\left(\partial_n \mathbf{U}'_\pm,Z\right)_{\partial \theta_\epsilon}\) 
can be estimated by the trace inequality and the asymptotic behaviour of the function
\(\mathbf{U}'_\pm\) in the vicinity of the point \(x^0\) 
which provides us the point wise estimate \(|\mathbf{U}'_\pm(x)|\leq c\log|x-x^0|\).

Now it is suffices to estimate the contribution coming from the asymptotic boundary
layer terms. For this assessment we use the main parts of the functions
\(\mathbf{W}^1_\pm\) and \(\mathbf{W}^2_\pm\). Let us denote by 
\[\epsilon \mathbf{W}^1_\pm(\xi;\pi,\psi)+\epsilon^2\mathbf{W}^2_\pm (\xi;\pi,\psi)=\epsilon^3
\mathbf{w}^0_\pm(x)+\widetilde{\mathbf{w}}_\pm(x).\] 
The asymptotic behaviour of \(\mathbf{w}^0\) in the neighbourhood of \(x^0\) is
\(|x-x^0|^{-2}\) and \(\widetilde{\mathbf{w}}_\pm(x)=O(\epsilon^4)\). Thus by Lemma 4.3, 
or more precisely by the Hardy-type inequality,

\begin{eqnarray*}
&&\left|\Lambda^0_\pm(\pi)(\chi\widetilde{\mathbf{w}}_\pm,Z)_{\varpi_\epsilon}
+\epsilon^3\Lambda'_\pm(\pi,\psi)(\chi
\mathbf{w}^0_\pm,Z)_{\varpi_\epsilon}\right|\nonumber\\ 
&\leq&
c\epsilon^{7/2}\||x-x^0|^{-1}Z;{L_2(\varpi_\epsilon)}
\| \\ 
&\leq&
c\epsilon^{7/2}\|Z;{H^1(\varpi_\epsilon)}\|.\nonumber
\end{eqnarray*}

The evaluation of the remaining terms shall be made at the expense of an
additional factor \(\epsilon^3\), or due to the replacement of \(\mathbf{w}^0_\pm\) by
\(\widetilde{\mathbf{w}}_\pm\) leads to an improvement in the estimates.

Collecting obtained inequalities we obtain that \(\tau\leq c \epsilon^{7/2}\).
Thus by Lemma 4.4 the operator 
\({\cal}B^\epsilon(\pi+\psi\epsilon^3)\)
has just two eigenvalues \(\beta_\pm(\pi+\psi\epsilon^3)\) in the interval
\([\ell_\pm^{-1}- c\epsilon^{3/2},\ell_\pm^{-1}+ c\epsilon^{3/2}]\). 

Now the relationship (\ref{beta}) of the spectral parameters of the operator \({\mathcal{B}}\)
gives the result. 

\begin{theorem}
There exist \(C>0\), \(\epsilon_0>0\) and \(\psi_0>0\) such that for all
\(\epsilon\in(0,\epsilon_0]\)  
under the condition \(|\psi|\epsilon^{3/2}<\psi_0\) the problem
(\ref{form3})-(\ref{form5}) with \(\eta=\psi+\pi\epsilon^3\) has two eigenvalues
satisfying the inequality
\begin{equation*}
\left|\Lambda^\epsilon_\pm(\pi+\psi\epsilon^3)-M_1-\pi^2-\epsilon^3\Lambda'_{\pm
}(\pi,
\psi)\right|\leq C \epsilon^{7/2}\,,
\end{equation*} 
where \(\Lambda'_\pm\) is taken from (\ref{L'psi}).
\end{theorem}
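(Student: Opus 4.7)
The plan is to invoke Lemma \ref{nearEig} with the compact selfadjoint operator $\mathcal{B}^\epsilon(\pi+\psi\epsilon^3)$ defined by (\ref{BB}), the test element $\mathcal{V}_\pm$ from (\ref{AA2}) (the normalized version of the full asymptotic ansatz $\mathcal{U}_\pm$ in (\ref{AA1})), and the target value $b_\pm=\ell_\pm^{-1}=\bigl(M_1+\pi^2+\epsilon^3\Lambda'_\pm(\pi,\psi)\bigr)^{-1}$. By Lemma 4.5 we already know $\|\mathcal{U}_\pm\|\ge c_{\mathcal{U}}>0$, and clearly $\ell_\pm\ge c_\ell>0$ under the restriction $|\psi|\epsilon^{3/2}\le\psi_0$, so $b_\pm$ and $\mathcal{V}_\pm$ are legitimate inputs. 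The core of the proof is therefore to bound
\[
\tau_\pm=\bigl\|\mathcal{B}^\epsilon(\pi+\psi\epsilon^3)\mathcal{V}_\pm-b_\pm\mathcal{V}_\pm;\mathcal{H}^\epsilon(\pi+\psi\epsilon^3)\bigr\|
\le c_\ell^{-1}c_{\mathcal{U}}^{-1}\sup_{\|Z\|=1}|T_\pm(Z)|,
\]
where $T_\pm(Z)$ is the discrepancy functional already decomposed into the six pieces $S_\pm^{(1)},\ldots,S_\pm^{(6)}$ in (\ref{ZZZ}).

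The scheme is then to show $|S_\pm^{(i)}(Z)|\le C\epsilon^{7/2}\|Z;H^1(\varpi_\epsilon)\|$ for each $i$. The term $S_\pm^{(1)}$ vanishes because $\mathbf{U}^0_\pm$ solves the unperturbed problem (\ref{form5})--(\ref{form7}). For $S_\pm^{(2)}$ the commutator $[\Delta_x,X_\epsilon]$ is supported in a shell of radius $O(\epsilon)$ around $x^0$, and the Taylor estimate (\ref{Taylor}) together with Cauchy-Schwarz gives the required bound. The pieces $S_\pm^{(3)}$ and $S_\pm^{(4)}$ collect the residues from the boundary layers: by construction $\mathbf{W}^1_\pm,\mathbf{W}^2_\pm$ are harmonic in $\xi$ and cancel precisely the Taylor expansion of $n\cdot\nabla \mathbf{U}^0_\pm$ and the quadratic piece prescribed in Section 3.2, so only the localisation error $[\Delta_x,\chi_\theta]\mathcal{W}_\pm$ and the tail $\Lambda^0_\pm\mathcal{W}_\pm$ remain. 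Splitting $\epsilon\mathbf{W}^1_\pm+\epsilon^2\mathbf{W}^2_\pm=\epsilon^3\mathbf{w}^0_\pm(x)+\widetilde{\mathbf{w}}_\pm(x)$ with $\mathbf{w}^0_\pm(x)=O(|x-x^0|^{-2})$ and $\widetilde{\mathbf{w}}_\pm=O(\epsilon^4|x-x^0|^{-3})$, and applying the Hardy-type weighted inequality of Lemma 4.3 to control $\||x-x^0|^{-1}Z\|_{L^2}$, produces the $\epsilon^{7/2}$ bound. The piece $S_\pm^{(5)}$ vanishes up to an $O(\epsilon^{7/2})$ residue thanks to the equation (\ref{u'psi}) derived for $\mathbf{U}'_\pm$ precisely from the solvability condition that defined $\Lambda'_\pm(\pi,\psi)$. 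Finally $S_\pm^{(6)}$ carries an explicit prefactor $\epsilon^6$ or $\epsilon^3\cdot\epsilon^{1/2}$ (from the same commutator-shell argument applied to $\mathbf{U}'_\pm$, using the pointwise bound $|\mathbf{U}'_\pm(x)|\le c|\log|x-x^0||$ coming from the logarithmic singularity of the Green correction), which is absorbed.

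Combining these estimates gives $\tau_\pm\le C\epsilon^{7/2}$, and Lemma \ref{nearEig} then places an eigenvalue $\beta_\pm^\epsilon$ of $\mathcal{B}^\epsilon$ in $[b_\pm-C\epsilon^{7/2},\,b_\pm+C\epsilon^{7/2}]$. Converting back via the spectral correspondence (\ref{beta}),
\[
\bigl|\Lambda_\pm^\epsilon(\pi+\psi\epsilon^3)-\ell_\pm\bigr|
=\bigl|\beta_\pm^{-1}-b_\pm^{-1}\bigr|
\le \frac{|\beta_\pm-b_\pm|}{\beta_\pm b_\pm}\le C\ell_\pm^2\,\tau_\pm\le C\epsilon^{7/2},
\]
which is exactly the asserted inequality once $\ell_\pm=M_1+\pi^2+\epsilon^3\Lambda'_\pm(\pi,\psi)$ is substituted.

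It remains to justify that \emph{two} eigenvalues, rather than one, lie in the prescribed neighbourhood. One argues by applying Lemma \ref{nearEig} with the two independent test vectors $\mathcal{V}_+$ and $\mathcal{V}_-$: when $F_1\neq 0$ the formula (\ref{L'psi}) separates $\ell_+$ from $\ell_-$ by at least $2|F_1|\epsilon^3>4C\epsilon^{7/2}$ for small $\epsilon$, so the two intervals of length $2C\epsilon^{7/2}$ are disjoint and each must contain a distinct eigenvalue. Combined with Theorem 4.2, which already guarantees that exactly two eigenvalues of $T^\epsilon(\eta)$ cluster near $M_1+\pi^2$ for $\eta$ close to $\pi$, the counting is complete. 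The main technical obstacle in the whole argument is the evaluation of the boundary-layer residuals $S_\pm^{(3)},S_\pm^{(4)}$: one must carefully keep track of how the algebraic decay of $\mathbf{W}^j_\pm$ at infinity and the weighted Hardy inequality interplay to yield precisely the exponent $7/2$ and not a worse power, and similarly that the inhomogeneous quasi-periodicity corrector $\mathbf{U}''$ (of order $\epsilon^6$) does not degrade the estimate.
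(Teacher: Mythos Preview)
Your proposal is correct and follows essentially the same route as the paper: you apply Lemma~\ref{nearEig} to the operator $\mathcal{B}^\epsilon(\pi+\psi\epsilon^3)$ with the test element $\mathcal{V}_\pm$ from (\ref{AA2}), invoke Lemma~4.5 for the lower bound on $\|\mathcal{U}_\pm\|$, estimate $\tau_\pm$ through the very same decomposition $S_\pm^{(1)},\dots,S_\pm^{(6)}$ of (\ref{ZZZ}) using the Taylor bound (\ref{Taylor}), the Hardy-type inequality of Lemma~4.3, and the logarithmic pointwise bound on $\mathbf{U}'_\pm$, and finally convert via (\ref{beta}). Your added paragraph on the multiplicity-two count makes explicit what the paper states at the start of \S4.2 as a consequence of Theorem~4.2; note, however, that your separation argument via $|F_1|>0$ is not needed for the theorem as stated (which does not assume $F_1\neq0$), and the clean way to secure two eigenvalues is exactly the Theorem~4.2 clustering statement you cite at the end.
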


\subsection{The detection of the gap.}

To detect a gap in the spectrum we apply the results of previous computations.
First, if \(|\eta-\pi|> \psi_0\epsilon^{3/2}\) with \(\psi_0>0\), then by means of
Theorem 4.2 we will show that the interval
\begin{equation}
\label{interval}
(M_1+\pi^2+(F_0-|F_1|)\epsilon^3 +c\epsilon^{7/2},
M_1+\pi^2+(F_0+|F_1|)\epsilon^3-c\epsilon^{7/2})
\end{equation}
is free of spectrum. Otherwise, in the case $|\eta-\pi|\leq
\psi_0\epsilon^{3/2}$
we make use of much more elaborated asymptotic result of Theorem 4.6 to
show that in the interval (\ref{interval}) there is no eigenvalues.
This provides us the existence of the  spectral gap.

To realise this scheme, we first observe that, if with some \(\psi_0>0\)
\[\eta\in (0, \pi-\psi_0\epsilon^{3/2}]\cup[\pi+\psi_0\epsilon^{3/2},2\pi)\,, \]
then by Theorem 4.2 
\begin{equation}\label{one}
\Lambda_1^\epsilon (\eta)\leq \Lambda^0_1(\eta)+C\epsilon^3
\leq M_1 +\pi^2-2\pi|\psi_0\epsilon^{3/2}|+\psi_0^2\epsilon^3+C\epsilon^3 
\end{equation}
and choosing \(\delta\in (0,\frac12]\) in Theorem 4.2
\begin{equation}\label{two}
\Lambda_2^\epsilon (\eta)\geq
\Lambda_2^0 (\eta)-C\epsilon^{3-\alpha}\geq  M_1
+\pi^2+2\pi|\psi_0\epsilon^{3/2}|+\psi_0^2\epsilon^3-C\epsilon^{3-\alpha}\,.
\end{equation}
Because of the terms \(\pm2\pi \psi_0\epsilon^{\frac32}\) on the right hand side of 
(\ref{one}) and (\ref{two}) we may choose \(\epsilon>0\) so small that
\[
\Lambda_1^\epsilon (\eta)<M_1+\pi^2+(F_0-|F_1|)\epsilon^3 +c\epsilon^{7/2}
\]
and
\[
\Lambda_2^\epsilon (\eta)>M_1+\pi^2+(F_0+|F_1|) \epsilon^3 -c\epsilon^{7/2} \,,
\]
and therefore the interval (\ref{interval}) does not include any eigenvalues.

If \(\eta=\pi+ \psi\epsilon^3 \in [\pi-\psi_0\epsilon^{3/2},
\pi+\psi_0\epsilon^{3/2}] \), then by Theorem 4.6
we have

\begin{eqnarray}
\Lambda_1^\epsilon (\eta)&\leq& M_1 +\pi^2+\epsilon^3 
\left(F_0 -\sqrt{|F_1|^2+4\pi^2\psi^2}\right) +c\epsilon^{7/2}\\ 
&\leq& 
M_1+\pi^2+(F_0-|F_1|)\epsilon^3 +c\epsilon^{7/2}\nonumber
\end{eqnarray}
and
\begin{eqnarray}
\Lambda_{2}^\epsilon (\eta)&\geq& M_1 +\pi^2+\epsilon^3 
\left(F_0 +\sqrt{|F_1|^2+4\pi^2\psi^2}\right) -c\epsilon^{7/2}\\
&\geq& 
M_1+\pi^2 +(F_0+|F_1|)\epsilon^3  - c\epsilon^{7/2}\nonumber
\end{eqnarray}
so interval (\ref{interval}) also is free of eigenvalues and we can formulate the
main result of the paper.

\begin{theorem} Assume that \(|F_1|>0\). Then
there exist positive numbers \(\epsilon_0\) and \(c_0\) such
that,
for \(\epsilon \in (0, \epsilon_0]\),
the essential spectrum of the problem \eqref{form1}-\eqref{form2} or
\eqref{dir-1} has a gap of length
\(\ell(\epsilon)\),
\[
\left|\ell(\epsilon)-2|F_1| \epsilon^3\right|<2 c_0 \epsilon^{7/2} 
\]
located just after the first segment \(\Upsilon_1(\epsilon)\). Moreover, the gap includes the interval
\[
(M_1+\pi^2 +(F_0-|F_1|)\epsilon^3-c_0\epsilon^{7/2}, 
M_1+\pi^2+(F_0+|F_1|)\epsilon^3+c_0\epsilon^{7/2})\,.
\] 
\end{theorem}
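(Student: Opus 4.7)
The plan is to combine the two asymptotic regimes for the dispersion functions $\eta\mapsto\Lambda_1^\epsilon(\eta),\Lambda_2^\epsilon(\eta)$, namely the coarse estimate of Theorem 4.2 valid away from $\eta=\pi$ and the sharp expansion of Theorem 4.6 valid on the scale $|\eta-\pi|\le\psi_0\epsilon^{3/2}$, and to show that on the whole interval $[0,2\pi)$ the upper envelope of $\Lambda_1^\epsilon(\eta)$ lies below the interval in (\ref{interval}) while the lower envelope of $\Lambda_2^\epsilon(\eta)$ lies above it. Because the essential spectrum $\sigma_e(\epsilon)$ is the union of the ranges of the $\eta\mapsto\Lambda_k^\epsilon(\eta)$, this will certify (\ref{interval}) as a subset of a gap; the band structure below shows this gap is sandwiched between $\Upsilon_1(\epsilon)$ and $\Upsilon_2(\epsilon)$, and the length estimate $\bigl|\ell(\epsilon)-2|F_1|\epsilon^3\bigr|<2c_0\epsilon^{7/2}$ follows by taking the extremal $\Lambda_1^\epsilon(\pi),\Lambda_2^\epsilon(\pi)$ in Theorem 4.6 and matching the $O(\epsilon^{7/2})$ error terms.

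Concretely, I would first fix $\psi_0>0$ (to be chosen later) and split $[0,2\pi)$ into the \emph{far} region $\{|\eta-\pi|>\psi_0\epsilon^{3/2}\}$ and the \emph{near} region $\{|\eta-\pi|\le\psi_0\epsilon^{3/2}\}$. In the far region Theorem 4.2 with some small $\alpha\in(0,1/2)$ gives
\[
\Lambda_1^\epsilon(\eta)\le \Lambda_1^0(\eta)+C\epsilon^3,\qquad
\Lambda_2^\epsilon(\eta)\ge \Lambda_2^0(\eta)-C\epsilon^{3-\alpha}.
\]
Using $\Lambda_1^0(\eta)\le M_1+\pi^2-2\pi\psi_0\epsilon^{3/2}+\psi_0^2\epsilon^3$ and the symmetric lower bound for $\Lambda_2^0$, the explicit $\pm 2\pi\psi_0\epsilon^{3/2}$ contributions dominate the $O(\epsilon^3)$ corrections once $\epsilon$ is small, so that
\[
\Lambda_1^\epsilon(\eta)<M_1+\pi^2+(F_0-|F_1|)\epsilon^3+c\epsilon^{7/2},\quad
\Lambda_2^\epsilon(\eta)>M_1+\pi^2+(F_0+|F_1|)\epsilon^3-c\epsilon^{7/2}.
\]

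In the near region I would apply Theorem 4.6 with $\eta=\pi+\psi\epsilon^3$. Using $\sqrt{|F_1|^2+4\pi^2\psi^2}\ge |F_1|$ in (\ref{L'psi}) I get
\[
\Lambda_1^\epsilon(\eta)\le M_1+\pi^2+(F_0-|F_1|)\epsilon^3+c\epsilon^{7/2},\qquad
\Lambda_2^\epsilon(\eta)\ge M_1+\pi^2+(F_0+|F_1|)\epsilon^3-c\epsilon^{7/2}.
\]
Combining the two regions establishes that (\ref{interval}) is disjoint from the ranges of $\Lambda_1^\epsilon$ and $\Lambda_2^\epsilon$. Since the hypothesis (\ref{cond1}), which is built into the set-up for the first gap, ensures that the higher segments $\Upsilon_k(\epsilon)$ with $k\ge 3$ lie strictly above this window for $\epsilon$ small, the interval (\ref{interval}) indeed sits in $\overline{\mathbb R_+}\setminus\sigma_e(\epsilon)$ and is located between $\Upsilon_1(\epsilon)$ and $\Upsilon_2(\epsilon)$.

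The length estimate is read off as follows: the right endpoint of $\Upsilon_1(\epsilon)$ equals $\max_\eta \Lambda_1^\epsilon(\eta)=\Lambda_1^\epsilon(\pi)$ (by continuity and the monotonicity structure of $\Lambda_1^0$, $\Lambda_2^0$ in (\ref{dispcurve})), and by Theorem 4.6 at $\psi=0$ this equals $M_1+\pi^2+(F_0-|F_1|)\epsilon^3+O(\epsilon^{7/2})$. Similarly the left endpoint of $\Upsilon_2(\epsilon)$ is $\Lambda_2^\epsilon(\pi)=M_1+\pi^2+(F_0+|F_1|)\epsilon^3+O(\epsilon^{7/2})$. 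Subtracting yields $\ell(\epsilon)=2|F_1|\epsilon^3+O(\epsilon^{7/2})$, and taking $c_0$ to absorb both error constants gives the announced bound. The main technical obstacle is the transition at $\eta=\pi$, where the unperturbed eigenvalue is double: the matching of the far-region bound (whose $\epsilon^{3-\alpha}$ error is much larger than the $\epsilon^{7/2}$ gap width) with the near-region expansion requires choosing the matching scale $\psi_0\epsilon^{3/2}$ so that the linear-in-$\psi_0\epsilon^{3/2}$ gain from $\Lambda_\pm^0$ beats the Theorem 4.2 remainder; this is precisely why the exponent $3/2$ (and not, say, $3$) appears in (\ref{psi}) and in the width of the matching window.
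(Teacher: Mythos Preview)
Your argument is correct and follows the paper's own proof essentially line for line: the same far/near splitting at scale $|\eta-\pi|=\psi_0\epsilon^{3/2}$, the same use of Theorem~4.2 in the far region (where the $\pm2\pi\psi_0\epsilon^{3/2}$ term dominates the $O(\epsilon^{3-\alpha})$ remainder), and the same use of Theorem~4.6 with $\sqrt{|F_1|^2+4\pi^2\psi^2}\ge|F_1|$ in the near region. One small caveat: you need not (and cannot, with the tools at hand) assert that $\max_\eta\Lambda_1^\epsilon(\eta)$ is attained \emph{exactly} at $\eta=\pi$; the two-sided length estimate follows directly from the two-sided bound in Theorem~4.6 at $\psi=0$ together with the gap lower bound you already established.
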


\subsection{Asymptotic formulae for the position and size of the spectral gap}

We proceed with discussing the Neumann problem (\ref{form1})-(\ref{form2}). 
Based on the relations (\ref{oulu1}) we first conclude that the condition \(|F_1|>0\) of Theorem 4.7
is always fulfilled, so that the gap
\(I(\epsilon)=(A_{-}(\epsilon),A_{+}(\epsilon))\)
opens in the spectrum. Moreover, its length is 
\[
 2\pi^2\epsilon^3\frac{\textrm{meas}_3(\theta)-Q_{33}}{\textrm{meas}_2(\omega)}+O(\epsilon^\frac72),
\]
where the endpoints admit the asymptotic form
\begin{eqnarray*}
A_{-}(\epsilon)&=&\pi^2+2\pi^2\epsilon^3\frac{Q_{33}}{\textrm{meas}_2(\omega)}+O(\epsilon^\frac72),\\
A_{+}(\epsilon)&=&\pi^2+2\pi^2\epsilon^3\frac{\textrm{meas}_3(\theta)}{\textrm{meas}_2(\omega)}+O(\epsilon^\frac72).
\end{eqnarray*}
Since \(Q_{33}<0\) (see Remark 3.1), the gap covers the point \(\pi^2\).

For the mixed boundary value problem (\ref{dir-1}) let \(y^0\) be the maximum point of 
the positive eigenfunction \(V_1\).
Then \(\nabla_y V_1(y^0)=0\) and the formula (\ref{oulu2}) leads to a 
similar simple situation as for the Neumann problem, because then the number
\begin{eqnarray*}
 e^{2\pi i z_0}F_1&=&
\nabla_y V_1(y^0)^\top Q'\nabla_y V_1(y^0)+
(M_1+\pi^2(\textrm{meas}_3(\theta)-Q_{33}))|V_1(y^0)|^2\\
&-&2\pi i\big(Q_{31}V_1(y^0)\frac{\partial V_1(y^0)}{\partial y_1}+
Q_{32}V_1(y^0)\frac{\partial V_1(y^0)}{\partial y_2}\big)\\
&=&(M_1+\pi^2(\textrm{meas}_3(\theta)-Q_{33})|V_1(y^0)|^2
\end{eqnarray*}
is for surely positive. The interval \(I(\epsilon)\) has the endpoints
\begin{eqnarray*}
A_{-}(\epsilon)&=&M_1+\pi^2+2\pi^2\epsilon^3Q_{33}|V_1(y^0)|^2+O(\epsilon^\frac72),\\
A_{+}(\epsilon)&=&M_1+\pi^2+2\epsilon^3(M_1+\pi^2\textrm{meas}_3(\theta))|V_1(y^0)|^2+O(\epsilon^\frac72)
\end{eqnarray*}
and its length is 
\[
2\epsilon^3(M_1+\pi^2(\textrm{meas}_3(\theta)-Q_{33}))|V_1(y^0)|^2+O(\epsilon^\frac72).
\]
The point \(M_1+\pi^2\) is contained in the interval \(I(\epsilon)\).

Theorem 4.7 ensures the opening of the in the case \(|F_1|>0\). However, in the mixed boundary value 
problem varying the position of the point \(y^0\)
in \(\omega\) we may come across the relation \(F_1=0\). In this case the asymptotic
formulae obtained here do not allow us to conclude the existence of the gap in the spectrum.

To show that there exist points for which \(|F_1|=0\), we assume that \(Q_{12}=Q_{13}=0\), 
for example due to the central symmetry of the set \(\theta\). Then the expression
\[ e^{2\pi i z_0}F_1=
\nabla_y V_1(y^0)^\top Q'\nabla_y V_1(y^0)+
(M_1+\pi^2(\textrm{meas}_3(\theta)-Q_{33}))|V_1(y^0)|^2\]
is real. Moreover,if \(y^0\) is the maximum point of \(V_1\), then the expression is positive; 
but it becomes negative when \(y^0\) locates close to the boundary \(\partial \omega\) 
where the homogeneous Dirichlet condition is imposed. 
This is due to the fact that the second term, which is positive, diminishes, while the first term remains 
strictly negative all the time according to Remark 3.1, because the normal derivative 
\(\partial_n V_1\)  is negative everywhere on \(\partial \omega\). Hence there must be a subset 
in \(\omega\) where \(F_1=0\). To prove or disprove the existence
of a spectral gap one needs to construct higher order terms in the asymptotic ans\"atze.

\subsection*{Acknowledgements} The first author was supported by the Chebyshev Laboratory
under RF government grant 11.G34.31.0026. The second author was supported
by Russian Foundation on Basic Research grant 09-01-00759.

\end{document}